 \newtheorem{thm}{Theorem}[section]
 \newtheorem{cor}[thm]{Corollary}
 \newtheorem{lem}[thm]{Lemma}
 \newtheorem{prop}[thm]{Proposition}
 \newtheorem{defn}[thm]{Definition}
 \newtheorem{rem}[thm]{Remark}
\begin{document}

%-------------------------------------------------------------------------
% editorial commands: to be inserted by the editorial office
%
%\firstpage{1} \volume{228} \Copyrightyear{2004} \DOI{003-0001}
%
%
%\seriesextra{Just an add-on}
%\seriesextraline{This is the Concrete Title of this Book\br H.E. R and S.T.C. W, Eds.}
%
% for journals:
%
%\firstpage{1}
%\issuenumber{1}
%\Volumeandyear{1 (2004)}
%\Copyrightyear{2004}
%\DOI{003-xxxx-y}
%\Signet
%\commby{inhouse}
%\submitted{March 14, 2003}
%\received{March 16, 2000}
%\revised{June 1, 2000}
%\accepted{July 22, 2000}
%
%
%
%---------------------------------------------------------------------------
%Insert here the title, affiliations and abstract:
%

\title[Hyperfunction  Semigroups]{Hyperfunction  Semigroups}

%----------Author 1
\author[Kosti\' c M.]{Kosti\'c Marko}

%\address{Faculty of Technical sciences,
%Trg D. Obradovi\' ca 6 21125 Novi Sad, Serbia}
%
%\email{marko.s@verat.net}

%\thanks{This work was completed with the support of our
%\TeX-pert.}

%----------Author 2
\author[Pilipovi\' c S.]{Pilipovi\' c Stevan}
%\address{University of Novi Sad,
%Trg D. Obradovi\' ca 4
%21000 Novi Sad,
%Serbia}
%\email{pilipovic@dmi.uns.ac.rs}

%----------Author 3
\author[Velinov D.]{Velinov Daniel}
%\address{Department for Mathematics, Faculty of Civil
%Engineering-Skopje, Partizanski Odredi 24,
%P.O. box 560, 1000 Skopje, Macedonia}
%\email{velinovd@gf.ukim.edu.mk}
%----------classification, keywords, date
%\subjclass{Primary 47D03, 47D06,  46F15; Secondary  47D60}
%
%\keywords{Fourier hyperfunction semigroups, hyperfunction semigroups, structural theorems}

%\date{January 31, 2014}
%----------additions
%\dedicatory{To my boss}
%%% ----------------------------------------------------------------------

\begin{abstract}
We analyze Fourier hyperfunction and hyperfunction
semigroups with non-densely defined
generators and their connections with local convoluted
$C$-semigroups. Structural theorems and
spectral characterizations give necessary and sufficient conditions
for the existence of such semigroups generated by a closed not
necessarily densely defined operator $A$.
\end{abstract}

%%% ----------------------------------------------------------------------
\maketitle
%%% ----------------------------------------------------------------------
%\tableofcontents
\section{Introduction and preliminaries}
\noindent The papers on ultradistribution semigroups, \cite{kps}, \cite{msd} extend the classical theory of semigroups, (see
\cite{li121}, \cite{cha}, \cite{ito1},
 \cite{ki90}, \cite{ko98} and
\cite{ku112}). S. \={O}uchi \cite{o192}
was the first who introduced the class of hyperfunction
semigroups, more general than that of distribution and
ultradistribution semigroups and in \cite{o193} he considered the
abstract Cauchy problem in the space of hyperfunctions. Furthermore, generators of
hyperfunction semigroups in the sense of \cite{o192} are not
necessarily densely defined. A.N. Kochubei, \cite{kochu} considered hyperfunction solutions
on abstract differential equations of higher order.
We analyze  Fourier hyperfunction
semigroups with non-densely defined generators continuing
over the investigations of Roumieu type ultradistribution semigroups
and constructed examples of tempered ultradistribution
semigroups \cite{kps} as well as of Fourier hyperfunction semigroups with
non-densely defined generators.
An analysis of R. Beals
 ~\cite[Theorem 2']{b41} gives an example of
a densely defined operator $A$ in the Hardy space $H^{2}({\mathbb
C}_{+})$ which generates a hyperfunction semigroup of \cite{o192}
but this operator is not a generator of any
ultradistribution semigroup, and any (local) integrated $C$-semigroups,
$C\in L(H^{2}({\mathbb C}_{+}))$.
Our main interest is the existence of fundamental solutions for the
Cauchy problems with initial data being hyperfunctions.

In the definition of  infinitesimal generators for distribution and
ultradistribution semigroups in the non-quasi-analytic case, all
authors use test functions supported by $[0,\infty).$ Such an
approach cannot be used in the case of Fourier hyperfunction
semigroups since  in the quasi-analytic case only the zero function
has this property. Because of that, we define such semigroups on
test spaces $\mathcal{P}_{*}$ and $\mathcal{P}_{*,a}$ ($a>0$) but
the axioms for such semigroups as well as the definition of
infinitesimal generator are given on subspaces  of quoted spaces
consisting of functions $\phi$ with the property $\phi(0)=0$ and
$\phi'(0)=0$. We note that the same  can be done for the
distribution and ultradistribution semigroups (we leave this for
another paper).

Section 2 is devoted to Fourier hyperfunction semigroups.
As we mentioned, the definition of such semigroups
is intrinsically different than that of ultradistribution
semigroups because test functions
with the support bounded on the left cannot be used.
Fourier hyperfunction semigroups with densely
defined infinitesimal generators
were introduced by Y. Ito \cite{ito2}
related to the
corresponding Cauchy problem \cite {ito1}.
We give structural and spectral characterizations
of Fourier- and exponentially bounded Fourier hyperfunction
semigroups with  non-dense infinitesimal generators,
their relations with the convoluted semigroups and
to the corresponding Cauchy problems. Spectral properties of
hyperfunction semigroups give a new insight to S. \={O}uchi's
results.

\subsection{Hyperfunction and Fourier hyperfunction type spaces}

The basic facts about  hyperfunctions and Fourier
hyperfunctions  of M. Sato can be found on an
elementary level in the monograph of A. Kaneko \cite{kan}
(see also \cite{mor}, \cite{hor},
\cite{kaw1}-\cite{kaw2}). Let $E$ be a Banach space,
$\Omega$ be an open set in $\mathbb{C}$ containing an open set
$I\subset \mathbb{R}$ as a closed subset and let ${\mathcal
O}(\Omega)$ be the space of $E-$valued holomorphic functions on
$\Omega$ endowed with the topology of uniform convergence on compact
sets of $\Omega$.
The space of $E-$valued hyperfunctions on $I$ is defined as
${\mathcal B}(I,E):={\mathcal O}(\Omega\setminus I,E)/{\mathcal
O}(\Omega,E).$ A representative of $f=[f(z)]\in {\mathcal B}(I,E)$,
$f\in{\mathcal O}(\Omega\setminus I,E)$ is called a defining
function of $f$.
The space of hyperfunctions
supported by a compact set $K\subset I$ with values in $E$ is
denoted by $\Gamma_K(I,{\mathcal B}(E))={\mathcal B}(K,E).$ It is
the space of continuous linear mapping from ${\mathcal A}(K)$ into
$E$, where ${\mathcal A}(K)$ is the inductive limit type space of analytic functions in
neighborhoods of $K$ endowed with the appropriate topology \cite{k78}.
Denote by ${\mathcal A}(\mathbb{R})$ the space of real
analytic functions on $\mathbb{R}$: ${\mathcal A}(\mathbb{R})=$proj
lim$_{K\subset\subset{\mathbb R}}{\mathcal A}(K)$. The space of
continuous linear mappings from ${\mathcal A}(\mathbb{R})$ into $E,$
denoted by ${\mathcal B}_c(\mathbb{R},E),$ is consisted of compactly
supported elements of ${\mathcal B}(K,E),$ where $K$ varies through
the family of all compact sets in ${\mathbb R}.$ We denote by
$\mathcal{B}_+(\mathbb{R},E)$ the space of $E-$valued hyperfunctions
whose supports are contained in $[0,\infty).$
As in the scalar case ($E=\mathbb{R}$) we have, if
$f\in {\mathcal B}_c({\mathbb R},E)$ and supp$f\subset
\{a\}$, then $f=\sum _{n=0}^{\infty}\delta^{(n)}(\cdot-a)x_n,
\; x_n\in E,$ where $\lim \limits_{n\rightarrow
\infty}(n!||x_n||)^{1/n}$ $= 0.$
Let $\mathbb{D}=\{-\infty,+\infty\}\cup{\mathbb R}$ be the radial
compactification of the space ${\mathbb R}.$
Put $I_\nu=(-1/\nu,1/\nu),\ \nu>0.$ For
$\delta>0$, the space ${\tilde{\mathcal O}}^{-\delta}({\mathbb
D}+iI_\nu)$ is defined as a subspace of $\mathcal{O}({\mathbb
R}+iI_\nu)$ with the property that for every $K \subset\subset
I_\nu$ and $ \varepsilon>0$ there exists a suitable $C>0$ such that
$ |F(z)| \leq Ce^{-(\delta-\varepsilon)|Re z|},\; z\in {\mathbb
R}+iK.$ Then $ {\mathcal P}_*(\mathbb{D}):=$ind$\lim_
{n\rightarrow \infty}\tilde{\mathcal O}^{-1/n}({\mathbb D}+iI_n)$ is
the space of all {\it rapidly decreasing, real analytic functions}
(cf. ~\cite[Definition 8.2.1]{kan}) and the space of
Fourier hyperfunctions ${\mathcal Q}(\mathbb{D},E)$ is the space of
continuous linear mappings from ${\mathcal P}_*(\mathbb{D})$ into
$E$ endowed with the strong topology. We point out that Fourier
hyperfunctions were firstly introduced by M. Sato in \cite{satovi}
who called them {\it slowly increasing hyperfunctions}. Let us note
that the sub-index $*$ in ${\mathcal P}_*(\mathbb{D})$ does not have
the meaning as in the case of ultradistributions. This is often used
notation in the literature (cf. \cite{kan}). Recall, the restriction mapping
$\mathcal{Q}(\mathbb{D},E) \rightarrow {\mathcal
B}(\mathbb{R},E)$ is surjective, see ~\cite[Theorem 8.4.1]{kan}. For further
relations between the spaces $\mathcal{B}(\mathbb{R})$ and
${\mathcal Q}(\mathbb{D}),$ we refer to ~\cite[Section 8]{kan}.

Recall \cite{kan}, an operator of the form
$P(d/dt)=\sum_{k=0}^{\infty}b_k(d/dt)^k$ is called a {\it local
operator} if $\lim \limits_{k\rightarrow \infty}
(|b_k|k!)^{1/k}=0.$ Note that the composition and the sum
of local operators is again a local operator.

The main structural property of ${\mathcal Q}(\mathbb{D})$ says that
every element $f\in {\mathcal Q}(\mathbb{D})$ is of the form
$f=P(d/dt)F,$ where $P$ is a local operator and $F$ is a continuous
slowly increasing function, that is, for every $\varepsilon>0$ there
exists $C_\varepsilon>0$ such that $|F(t)|\leq C_\varepsilon
e^{\varepsilon|t|},\; t \in \mathbb{R}.$ More precisely, we have the
following global structural theorem (cf. ~\cite[Proposition 8.1.6,
Lemma 8.1.7, Theorem 8.4.9]{kan}), reformulated here with a sequence
$(L_p)_p$:

{\it Let, formally,
\begin{equation}\label{str3}
P_{L_p}(d/dt)=\prod_{p=1}^\infty(1+\frac{L_p^2}{p^{2}}d^2/dt^2)=\sum_{p=0}^\infty
a_pd^p/dt^p,
\end{equation}
where $(L_p)_p$  is a sequence decreasing to $0.$
 This is a local operator and we call it hyperfunction operator.Then \cite{kan}:
 \\ \indent
Let $T\in {\mathcal Q}({\mathbb D},E)$.  There is a local
operator $P_{L_p}(-id/dt)$ (with a corresponding sequence $(L_p)_p$)
and a continuous slowly increasing function $f:{\mathbb
R}\rightarrow E$, which means that, for every $\varepsilon>0$ there exists
$C_\varepsilon>0$ such that $||f(x)||\leq C_\varepsilon
e^{\varepsilon|x|},\; x\in {\mathbb R}$ and that
$T=P_{L_p}(-id/dt)f$. }

If a hyperfunction is compactly supported, supp$f\subset K,$ $f\in
{\mathcal B}(K,E)$, then we have the above representation with a
corresponding local operator  $P_{L_p}(-id/dt)$ and a continuous
$E-$valued function in a neighborhood of $K$.

The spaces of Fourier hyperfunctions were also analyzed by J. Chung,
S.-Y. Chung and D. Kim in \cite{chungovi}-\cite{chungov}. Following
this approach, we have that ${\mathcal P}_*(\mathbb{D})$ is
(topologically) equal to the space of $C^{\infty}-$functions $\phi$
defined on ${\mathbb R}$ with the property:
$ (\exists h>0) (||\phi||_{h}<\infty),$
where the norms $||\cdot||_{h}, \;h>0,$ are defined by
$||\phi||_{h}:= \sup
\{{||\phi^{(n)}(x)||e^{|x|/h}}/{(h^n n!)} : n\in\mathbb{N}_0,\
x\in \mathbb{R}\},
$
equipped with the corresponding inductive limit topology when
$h\rightarrow +\infty$. The next lemma can be proved by the standard
arguments using the norms $||\phi||_{h}.$

\begin{lem}\label{djuka}
If $ \phi,\ \psi \in {\mathcal P}_*(\mathbb{D}),$ then
$\phi*_0\psi=\int_0^t\phi(\tau)\psi(t-\tau)\, d\tau\, , t>0$ is in ${\mathcal P}_*(\mathbb{D})$ and the mapping $\ast_{0
} : {\mathcal P}_*(\mathbb{D})\times{\mathcal
P}_*(\mathbb{D})\rightarrow{\mathcal P}_*(\mathbb{D})$ is
continuous.
\end{lem}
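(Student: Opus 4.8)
The plan is to work entirely with the weighted norms $\|\cdot\|_h$ from the Chung--Chung--Kim description of $\mathcal{P}_*(\mathbb{D})$, since $\phi\in\mathcal{P}_*(\mathbb{D})$ precisely when $\|\phi\|_h<\infty$ for some $h>0$, and one checks immediately that $\|\cdot\|_h$ is non-increasing in $h$ (the weight $e^{|x|/h}$ decreases while the denominator $h^n n!$ grows). Given $\phi,\psi$, I would first fix a common $h\ge 1$ with $A:=\|\phi\|_h<\infty$ and $B:=\|\psi\|_h<\infty$, so that $\|\phi^{(n)}(x)\|\le A\,h^n n!\,e^{-|x|/h}$ and likewise for $\psi$. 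Interpreting $\int_0^t$ as the oriented integral, $F:=\phi*_0\psi$ is $C^\infty$ on all of $\mathbb{R}$, and differentiating under the integral by the Leibniz rule gives, by an easy induction, the formula
\[
F^{(n)}(t)=\int_0^t \phi(\tau)\,\psi^{(n)}(t-\tau)\,d\tau+\sum_{j=0}^{n-1}\phi^{(n-1-j)}(t)\,\psi^{(j)}(0),\qquad t\in\mathbb{R}.
\]
Everything then reduces to bounding $\|F^{(n)}(t)\|\,e^{|t|/H}/(H^n n!)$ uniformly in $n$ and $t$ for a suitable $H>h$.

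For the integral term (take $t>0$; the case $t<0$ is symmetric once one records the identity $|\tau|+|t-\tau|=|t|$ on the range of integration), the pointwise bound $\|\phi(\tau)\|\,\|\psi^{(n)}(t-\tau)\|\le AB\,h^n n!\,e^{-|\tau|/h}e^{-|t-\tau|/h}$ together with $|\tau|+|t-\tau|=t$ yields $\big\|\int_0^t\phi(\tau)\psi^{(n)}(t-\tau)\,d\tau\big\|\le AB\,h^n n!\,t\,e^{-t/h}$. The unwanted factor $t$ is absorbed by slightly enlarging the weight: since $\sup_{s>0}s\,e^{-s(1/h-1/H)}=\tfrac{hH}{e(H-h)}$, one has $t\,e^{-t/h}\le \tfrac{hH}{e(H-h)}\,e^{-t/H}$. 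For the boundary sum I would use $\|\phi^{(n-1-j)}(t)\|\,\|\psi^{(j)}(0)\|\le AB\,h^{n-1}(n-1-j)!\,j!\,e^{-|t|/h}$ and the elementary estimate $j!(n-1-j)!\le(n-1)!$ (from $\binom{n-1}{j}\ge1$), so the $n$ summands total at most $AB\,h^{n-1}n!\,e^{-|t|/h}$. Combining, for $H>h$ with $H\ge1$ one obtains $\|F\|_H\le C_{h,H}\,\|\phi\|_h\,\|\psi\|_h$ with $C_{h,H}=1+\tfrac{hH}{e(H-h)}$, which simultaneously proves $F\in\mathcal{P}_*(\mathbb{D})$ and supplies the quantitative input for continuity.

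Finally, continuity of $*_0$ as a map of inductive limits follows from this single estimate: it exhibits $*_0$ as a bounded bilinear map from $\mathcal{P}_{*,h}\times\mathcal{P}_{*,h}$ into $\mathcal{P}_{*,H}$ for each admissible $h$, and since $\mathcal{P}_*(\mathbb{D})$ is a Silva space (hence a bornological $(DF)$-space), this factorwise boundedness upgrades to joint continuity by the standard inductive-limit argument. The main obstacle I expect is not conceptual but bookkeeping: correctly tracking the two ``defects'' that force $H>h$, namely the linear factor $t$ produced by the length of the integration interval and the loss of $h^{n-1}(n-1)!$ against $H^n n!$ in the boundary terms, together with verifying the derivative formula and the exponent identity $|\tau|+|t-\tau|=|t|$ uniformly for both signs of $t$.
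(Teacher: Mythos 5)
Your proposal is correct and takes essentially the same route as the paper: both work with the Chung--Chung--Kim norms $\|\cdot\|_h$, expand $\bigl(\int_0^t\phi(\tau)\psi(t-\tau)\,d\tau\bigr)^{(n)}$ into the integral term plus the boundary sum $\sum_{j}\phi^{(\cdot)}(t)\psi^{(\cdot)}(0)$, and estimate the two pieces separately by sacrificing a little exponential weight (the paper splits $e^{|t|/h_1}=e^{|t|/h}e^{|t|/h_2}$ inside the integral, you absorb the factor $t$ by passing from $h$ to $H$ --- the same trick in different bookkeeping). Your explicit bound $\|F\|_H\le C_{h,H}\|\phi\|_h\|\psi\|_h$ and the Silva-space argument for joint continuity in fact spell out a step the paper dismisses with ``follows similarly.''
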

\begin{proof}
%Without loss of generality, we can  assume $E={\mathbb R}.$
 Suppose
$x\in\mathbb{R},\ n\in\mathbb{N}$ and $h_1>0$ fulfill
$||\phi||_{h_1}<\infty.$ Suppose that $h>2 h_1$ satisfies
$||\psi||_{\frac{h}{2}}<\infty$ and put $h_2=\frac{hh_1}{h-h_1}.$
We will use the next inequality which holds for evey  $t$,$
\frac{|x|}{h} \leq \frac{|x-t|}{h}+\frac{|t|}{h}\leq
\frac{|x-t|}{h}+\frac{|t|}{h_1}-\frac{|t|}{h_2}.
$ We have
$$\sup_{n\in\mathbb{N}_0,\
x\in\mathbb{R}}\frac{e^{|x|/h}\bigl|\Bigl(\int_{0}^{x}
\phi(t)\psi(x-t)\, dt\Bigr)^{(n)}\bigr|}{h^nn!}\leq$$
$$\leq\sup_{n\in\mathbb{N}_0,\
x\in\mathbb{R}}\frac{e^{|x|/h}\int_{0}^{x}|\phi(t)\psi^{(n)}(x-t)|\, dt}{h^nn!}+
\sum_{j=0}^{n-1}\sup_{n\in\mathbb{N},\
x\in\mathbb{R}}\frac{e^{|x|/h}|\phi^{(j)}(x)||\psi^{(n-1-j)}(0)|}{h^nn!}=
$$
$$
=I+II.
$$
We will estimate separately $I$ and $II.$
$$I\leq \sup_{t\in \mathbb{R}}\Bigl(|\phi(t)|
e^{\frac{|t|}{h_1}}\Bigr)\Bigl(\int_{0}^{x}e^{-\frac{|t|}{h_2}}\, dt\Bigr)
\sup_{n\in\mathbb{N}_0,\ x,t\in \mathbb{R}}
\frac{|\psi^{(n)}(x-t)|e^{|x-t|/h}}{h^n n!},
$$
$$II\leq \frac{1}{2^n}\sum_{j=0}^{n-1}\sup_{j\in\mathbb{N}_0,\ x\in\mathbb{R}}
\frac{e^{|x|/h}|\phi^{(j)}(x)|}{(h/2)^jj!}
\sup_{n-j\in\mathbb{N}_0}\frac{|\psi^{(n-1-j)}(0)|}{(h/2)^{n-j}(n-j)!}
$$
This gives $\phi \ast_{0} \psi \in {\mathcal P}_*(\mathbb{D})$ while
the continuity of the mapping $\ast_{0} : {\mathcal
P}_*(\mathbb{D})\times{\mathcal P}_*(\mathbb{D})\rightarrow{\mathcal
P}_*(\mathbb{D})$ follows similarly. This completes the proof of the
lemma.
\end{proof}
Now we will transfer the definitions and assertions for Roumieu
tempered ultradistributions to Fourier hyperfunctions.
\begin{defn}\label{fhprostor}
Let $a\geq 0.$ Then
$$
{\mathcal{P}}_{*,a}({\mathbb D}):=\{\phi \in C^\infty({\mathbb R}) :
e^{a\cdot}\phi\in{\mathcal P}_*(\mathbb D)\}.
$$
Define the convergence in this space by
$$\phi_n\rightarrow 0 \mbox{ in } {\mathcal{P}}_{*,a}(\mathbb{D}) \mbox{ iff }
e^{a \cdot}\phi_n\rightarrow 0 \mbox{ in } {\mathcal P}_*({\mathbb
D}).
$$
We denote by $\mathcal{Q}_{a}(\mathbb{D},E)$ the space of continuous
linear mappings from $\mathcal{P}_{*,a}(\mathbb{D})$ into $E$
endowed with the strong topology.
\end{defn}
We have:
\begin{equation}\label{strnova}
F\in {\mathcal{Q}}_{a}(\mathbb{D},E) \mbox{ iff } e^{-a\cdot}F\in
{\mathcal{Q}}(\mathbb{D},E).
\end{equation}

\begin{prop}\label{novo}
Let $G \in \mathcal{Q}_{a}(\mathbb{D},L(E)).$ Then there exists a
local operator $P$  and a function $g\in C({\mathbb R},L(E))$ with
the property that for every $\varepsilon>0$ there exists
$C_\varepsilon>0$ such that
$$e^{-ax}||g(x)||\leq C_\varepsilon e^{\varepsilon|x|}, \;x\in \mathbb{R}\;
\mbox{ and }\;  G=P(d/dt)g.
$$
\end{prop}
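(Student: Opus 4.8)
The plan is to reduce Proposition~\ref{novo} to the already-stated global structural theorem for ordinary Fourier hyperfunctions via the weighting relation \eqref{strnova}. Given $G\in\mathcal{Q}_a(\mathbb{D},L(E))$, I would first set $T:=e^{-a\cdot}G$ and invoke \eqref{strnova} to conclude that $T\in\mathcal{Q}(\mathbb{D},L(E))$. The structural theorem then supplies a local operator $P_{L_p}(-id/dt)$ (equivalently written as $P(d/dt)$ after absorbing the $-i$ factors into the coefficients $a_p$) together with a continuous slowly increasing function $h:\mathbb{R}\to L(E)$ — meaning that for every $\varepsilon>0$ there is $C_\varepsilon>0$ with $\|h(x)\|\le C_\varepsilon e^{\varepsilon|x|}$ — such that $T=P(d/dt)h$.

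The second step is to transfer this representation back to $G$ by multiplying through by $e^{a\cdot}$. Here the main obstacle arises: $e^{a\cdot}P(d/dt)h$ is \emph{not} simply $P(d/dt)(e^{a\cdot}h)$, because the local operator $P(d/dt)=\sum_{p=0}^\infty a_p\,d^p/dt^p$ does not commute with multiplication by $e^{a\cdot}$. I would handle this by a Leibniz-type expansion: writing $g:=e^{a\cdot}h$ as the candidate function, one checks that $e^{a\cdot}P(d/dt)h=\tilde P(d/dt)g$ for a \emph{new} formal differential operator $\tilde P$ whose coefficients are obtained from the $a_p$ by the binomial shift induced by conjugating $d/dt$ by $e^{a\cdot}$ (formally, $e^{a\cdot}(d/dt)e^{-a\cdot}=d/dt-a$, so $\tilde P(d/dt)=P(d/dt-a)$ in the sense of reindexing the series). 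The crux is verifying that $\tilde P$ is again a genuine local operator, i.e.\ that its coefficients $\tilde a_k$ still satisfy $\lim_{k\to\infty}(|\tilde a_k|k!)^{1/k}=0$. This follows from the estimate $\tilde a_k=\sum_{p\ge k}a_p\binom{p}{k}(-a)^{p-k}$ combined with the decay of $(|a_p|p!)^{1/k}$, using that the binomial factor grows only polynomially in $p$ for fixed $k$ and that the remark preceding Lemma~\ref{djuka} guarantees composition and sum of local operators remain local; one may equally observe that $P(d/dt-a)$ is the composition of $P$ with the local (indeed entire first-order) operator $d/dt-a$.

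With the local operator $\tilde P$ in hand I set $g:=e^{a\cdot}h\in C(\mathbb{R},L(E))$, so that
$$
G=e^{a\cdot}T=e^{a\cdot}P(d/dt)h=\tilde P(d/dt)g.
$$
It remains to record the growth bound on $g$. Since $\|h(x)\|\le C_\varepsilon e^{\varepsilon|x|}$ for every $\varepsilon>0$, we have $e^{-ax}\|g(x)\|=e^{-ax}e^{ax}\|h(x)\|=\|h(x)\|\le C_\varepsilon e^{\varepsilon|x|}$, which is exactly the asserted estimate $e^{-ax}\|g(x)\|\le C_\varepsilon e^{\varepsilon|x|}$. Renaming $\tilde P$ as $P$ yields the statement. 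The only delicate point is the locality of the conjugated operator; the growth estimate and the representation itself are then immediate from \eqref{strnova} and the scalar structural theorem applied in the $L(E)$-valued setting.
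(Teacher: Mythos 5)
Your reduction is exactly the paper's: by \eqref{strnova} apply the structure theorem to $e^{-a\cdot}G$, obtaining $G=e^{a\cdot}P(d/dt)h$; set $g=e^{a\cdot}h$ (so the growth bound is trivial, as you note); and rewrite $e^{a\cdot}P(d/dt)e^{-a\cdot}$ as a new formal operator $\tilde P(d/dt)=P(d/dt-a)$ with coefficients $\tilde a_k=\sum_{p\ge k}a_p\binom{p}{k}(-a)^{p-k}$ --- the paper's $c_t$ is your $\tilde a_k$ after the substitution $p=t+k$. The entire content of the proposition is therefore the verification that $\lim_{k\to\infty}(|\tilde a_k|k!)^{1/k}=0$, and this is where your proposal has a genuine gap: neither of the two justifications you offer is valid. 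The observation that $\binom{p}{k}$ grows only polynomially in $p$ for \emph{fixed} $k$ proves only that each series $\tilde a_k$ converges; locality is a limit statement as $k\to\infty$, so a bound uniform in $k$ is required, and the crude estimate $\binom{p}{k}\le p^k$ behind that remark is too lossy (already its $p=k$ term contributes $\varepsilon^k k^k$ to the resulting bound on $|\tilde a_k|k!$, whose $k$-th root diverges). The fallback appeal to ``composition of local operators is local'' is a misreading of that remark: composition there means operator composition, whose symbol is the \emph{product} $P(\lambda)(\lambda-a)$, whereas your $\tilde P$ has symbol the \emph{substitution} $P(\lambda-a)$. Substitution does not in general preserve locality, even for polynomial inner functions: if $P(\lambda)=\sum_p\lambda^p/(p!)^2$ (a local operator), then $P(\lambda^2)$ has coefficients $1/(p!)^2$ in degree $2p$, and $\bigl((2p)!/(p!)^2\bigr)^{1/(2p)}\to 2\neq 0$, so $P(\lambda^2)$ is not local although $\lambda^2$ is. Degree-one monic substitution does work, but that is precisely what has to be proved.

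The missing estimate --- which is what the paper's proof actually supplies, in a somewhat more laborious form --- can be closed in a few lines: given $\varepsilon>0$, locality of $P$ gives $|a_p|\le\varepsilon^p/p!$ for all large $p$; since $\binom{p}{k}\,k!=p!/(p-k)!$, for all large $k$
$$|\tilde a_k|\,k!\;\le\;\sum_{p\ge k}\frac{\varepsilon^p}{p!}\cdot\frac{p!}{(p-k)!}\,|a|^{p-k}\;=\;\varepsilon^k\sum_{j\ge 0}\frac{(\varepsilon|a|)^j}{j!}\;=\;\varepsilon^k e^{\varepsilon|a|},$$
so $\limsup_{k\to\infty}(|\tilde a_k|k!)^{1/k}\le\varepsilon$ for every $\varepsilon>0$, i.e.\ the limit is $0$. (Equivalently, one can use that local operators are exactly the entire functions of infraexponential type and note $|P(\lambda-a)|\le C_\varepsilon e^{\varepsilon|a|}e^{\varepsilon|\lambda|}$.) With this step supplied, your argument coincides with the paper's proof.
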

\begin{proof}
From the structure theorem for the space ${\mathcal Q}({\mathbb D},L(E))$ and since $e^{-a\cdot}G\in{\mathcal Q}({\mathbb D},L(E))$, there exists a local operator $P$ and a function $g_1$ with the property that for every $\varepsilon>0$ there is corresponding $C_{\varepsilon}>0$ such that $$\|g_1(x)\|\leq C_{\varepsilon} e^{\varepsilon |x|},\, x\in{\mathbb R}\quad \mbox{and} \quad G=e^{ax}P(d/dt)g_1\, .$$ We put $g(x)=e^{ax}g_1(x)$, $x\in\mathbb R$. Using Leibnitz formula , we have $$e^{ax}P(d/dt)g_1(x)=\sum\limits_{t=0}^{\infty}(\sum\limits_{k=0}^{\infty}{{t+k}\choose t} (-1)^ka^kb_{k+t})(e^{ax}g_1(x))^{(t)}\, .$$
The assertion will be proved if we show that $\lim\limits_{|t|\rightarrow\infty}(|c_t|t!)^{\frac{1}{t}}=0$, where $c_t=\sum\limits_{k=0}^{\infty}{{k+t}\choose k}a^k b_{k+t}$. To prove this, we  use $${{t+k}\choose k}\leq (t+k)^k\leq 2^kk^k+2^kt^k\leq 2^k(k^k+k^ke^t)=2^kk^k(1+e^t)\, ,$$ where we used $t^k\leq k^ke^t$. The last inequality is clear for $k\geq t$. For $k<t$, we put $k=\nu t$. First let we note that $\nu\ln{\nu}\in (-1,0)$.
Then $\nu t\ln t\leq\nu t\ln t +\nu t\ln \nu+t$. Hence $t^k\leq k^ke^t$. Now, $$c_t=\sum\limits_{k=0}^{\infty} 2^kk^k(1+e^t)a^kb_{k+t}=\sum\limits_{k=0}^{\infty}(2a)^kk^k(1+e^t)b_{k+t}\, .$$ The coefficients $b_{k+t}$ are coefficients of a local operator, so for all $\varepsilon>0$ , exists $M\in\mathbb N$ such that for all $t+k>M$, $|b_{k+t}|(t+k)!<{\varepsilon^{t+k}}$. With this we have $$t!|c_t|\leq (1+e^t)
\sum\limits_{k=0}^{\infty}\frac{(2a)^kk^k(t+k)!t!|b_{k+t}|}{(t+k)!}\leq\sum\limits_{k=0}^{\infty} \frac{(2a)^k(1+e)^te^kk!t!(t+k)!|b_{t+k}|}{(t+k)!}\leq$$ $$\leq\sum\limits_{k=0}^{\infty}\frac{(2a)^k(1+e^t)k!t!(t+k)!|b_{t+k}|}{t!k!}\leq
\sum\limits_{k=0}^{\infty}{(2ae)^k(1+e^t)}{\varepsilon^{t+k}}=(1+e^t){\varepsilon^t}
\sum\limits_{k=0}^{\infty}{(2ae\varepsilon)}^k\, $$ and the assertion follows since we can choose $\varepsilon$ arbitrary small.
\end{proof}

\begin{rem} By Lemma \ref{djuka}, one can easily prove that, if
$ \phi,\ \psi \in {\mathcal P}_{*,a}(\mathbb{D}),$ then
$\phi*_0\psi\in {\mathcal P}_{*,a}(\mathbb{D})$ and the mapping $\ast_{0
} : {\mathcal P}_{*,a}(\mathbb{D})\times{\mathcal
P}_{\ast,a}(\mathbb{D})\rightarrow{\mathcal P}_{*,a}(\mathbb{D})$ is
continuous.\end{rem}

For the needs of the Laplace
transform we define the space $\mathcal{P}_*([-r,\infty]), r>0$.
Note that $[-r,\infty]$ is compact in $\mathbb D.$

$\mathcal{P}_{\ast}([-r,\infty],h)$ is defined as the  space of smooth functions
$\phi$ on $(-r,\infty)$ with the property
$
||\phi||_{\ast,-r,h}<\infty,\;
\mbox{where}$
$$
||\phi||_{\ast,-r,h}:= \sup
\Bigl\{\frac{||\phi^{(\alpha)}(x)||e^{|x|/h}}{h^\alpha\alpha!} :
\alpha\in\mathbb{N}_0,\ x\in (-r,\infty)  \Bigr\}.
$$
Then
$$
\mathcal{P}_*([-r,\infty]):=\mbox{ind}\lim_{h\rightarrow
+\infty}\mathcal{P}_*([-r,\infty],h).
$$
\begin{lem}\label{dense}
$\mathcal P_*(\mathbb D)$ is dense in $\mathcal P_*([-r,\infty]).$
\end{lem}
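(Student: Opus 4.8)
The plan is to show that every $\psi \in \mathcal P_*([-r,\infty])$ is the limit, in the inductive–limit topology of $\mathcal P_*([-r,\infty])$, of restrictions to $(-r,\infty)$ of functions belonging to $\mathcal P_*(\mathbb D)$. A function in $\mathcal P_*(\mathbb D)$ restricts trivially to an element of $\mathcal P_*([-r,\infty])$, since the sup defining $\|\cdot\|_{*,-r,h}$ is taken over a subset of $\mathbb R$; so the content is to produce, for a fixed $\psi$, functions in $\mathcal P_*(\mathbb D)$ whose restrictions approximate $\psi$ in some norm $\|\cdot\|_{*,-r,h}$. I would build the approximants in two stages: first a translation to the right by a small $\delta>0$, which moves the left endpoint of the effective domain strictly to the left of $-r$, and then a regularization by the Gauss (heat) kernel, which is the natural analytic smoothing here because genuine cut-off functions are unavailable inside these quasi-analytic classes.

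\emph{Stage one (translation).} For $\delta>0$ put $\psi_\delta(y):=\psi(y+\delta)$. If $y>-r-\delta$ then $y+\delta>-r$, so $\psi_\delta$ is well defined on $(-r-\delta,\infty)$ using only the values of $\psi$ on $(-r,\infty)$, with no analytic continuation needed. From $\psi_\delta^{(\alpha)}(y)=\psi^{(\alpha)}(y+\delta)$ and $|y+\delta|\ge |y|-\delta$ one obtains, for $h'\ge h$, bounds of the same type with weight $e^{-|y|/h'}$, so $\psi_\delta\in\mathcal P_*([-r-\delta,\infty])$ with controlled norm. Moreover, writing $\psi_\delta^{(\alpha)}(x)-\psi^{(\alpha)}(x)=\int_0^\delta \psi^{(\alpha+1)}(x+s)\,ds$ and inserting the defining estimate for $\psi$, a short computation of the quotient in $\|\cdot\|_{*,-r,h'}$ (using $h'>h$, so that $\sup_\alpha(\alpha+1)(h/h')^\alpha<\infty$) gives $\|\psi_\delta-\psi\|_{*,-r,h'}\le C\delta$. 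Hence $\psi_\delta\to\psi$ in $\mathcal P_*([-r,\infty])$ as $\delta\to 0^+$.

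\emph{Stage two (Gauss regularization).} Let $\rho_\epsilon(u)=(4\pi\epsilon)^{-1/2}e^{-u^2/4\epsilon}$ and set $\phi_{\delta,\epsilon}(x):=\int_{-r-\delta}^{\infty}\psi_\delta(y)\,\rho_\epsilon(x-y)\,dy$. Since $\rho_\epsilon(x-y)$ extends to an entire function of $x$ and $\psi_\delta$ decays at $+\infty$, the integral converges and defines an entire function; it decays like $e^{-x/h'}$ as $x\to+\infty$ (Gaussian convolution preserves exponential decay up to a constant) and, because the integration starts at $-r-\delta$, like a Gaussian as $x\to-\infty$ (for $x<-r-\delta$ one has $x-y<0$ with $|x-y|\ge |x|-r-\delta$). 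Differentiating under the integral and using the Hermite-type bounds $|\rho_\epsilon^{(n)}(u)|\le C_\epsilon\,(c/\sqrt\epsilon)^{\,n}\sqrt{n!}\,e^{-u^2/8\epsilon}$ yields estimates $|\phi_{\delta,\epsilon}^{(n)}(x)|\le C\,(h'')^{n}n!\,e^{-|x|/h''}$ for $h''$ large enough, so $\phi_{\delta,\epsilon}\in\mathcal P_*(\mathbb D)$. For fixed $\delta$, standard mollification estimates (using the uniform derivative bounds on $\psi_\delta$) give $\phi_{\delta,\epsilon}\to\psi_\delta$ in $\mathcal P_*([-r,\infty])$ as $\epsilon\to0^+$: the only possible defect is near the left endpoint, but the missing mass $\int_{x+r+\delta}^{\infty}\rho_\epsilon(u)\,du\le\int_{\delta}^{\infty}\rho_\epsilon(u)\,du\to0$ uniformly for $x\in(-r,\infty)$ precisely because $\delta>0$ keeps $x-(-r-\delta)\ge\delta$ bounded away from $0$.

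\emph{Combination and main difficulty.} Choosing $\epsilon=\epsilon(\delta)$ so small that $\|\phi_{\delta,\epsilon(\delta)}-\psi_\delta\|_{*,-r,h''}\le\delta$, the triangle inequality gives $\|\phi_{\delta,\epsilon(\delta)}-\psi\|_{*,-r,h''}\to0$ as $\delta\to0^+$, which proves the density. The step I expect to be the crux is Stage two near the endpoint $-r$: a plain Gauss regularization of $\psi$ over $(-r,\infty)$ does not converge up to $x=-r$ (its boundary defect tends to $\tfrac12\psi(-r)\neq0$), so one cannot dispense with the preliminary translation, and because the classes are quasi-analytic one cannot simply multiply by a smooth left cut-off. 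The translate-then-mollify device is exactly what reconciles the two competing requirements—uniform approximation up to the left endpoint together with the global analyticity and two-sided decay needed for membership in $\mathcal P_*(\mathbb D)$.
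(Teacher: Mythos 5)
Your Stage one is fine, and so is the membership claim $\phi_{\delta,\epsilon}\in\mathcal P_*(\mathbb D)$; note also that the paper itself offers no argument here (it simply invokes Lemma 8.6.4 of Kaneko's book), so yours is necessarily an independent route. The genuine gap is in Stage two, and it propagates into the combination step. Your ``missing mass'' estimate controls only the zeroth derivative, whereas the norm $\|\cdot\|_{*,-r,h''}$ controls \emph{all} derivatives with weights $(h'')^{\alpha}\alpha!$. You mollify the extension of $\psi_\delta$ by zero across $-r-\delta$, and that extension has a jump (of size $\psi(-r)$, or of the first non-vanishing derivative of $\psi$ at $-r$) there; integration by parts therefore produces, besides the harmless term, boundary contributions:
\begin{equation*}
\phi_{\delta,\epsilon}^{(\alpha)}(x)=\int_{-r-\delta}^{\infty}\psi_\delta^{(\alpha)}(y)\rho_\epsilon(x-y)\,dy\;+\;\sum_{j=0}^{\alpha-1}\pm\,\psi^{(j)}(-r)\,\rho_\epsilon^{(\alpha-1-j)}(x+r+\delta),
\end{equation*}
and the boundary sum is not small in the required uniform-over-$\alpha$ sense unless $h''\gtrsim 1/\delta$. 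That this restriction is unavoidable is seen most cleanly through holomorphic extensions: convergence in $\|\cdot\|_{*,-r,h''}$ forces (by summing Taylor series at points $x_0>-r$) uniform convergence of the holomorphic extensions on every disk $D(x_0,\rho)$, $\rho<1/h''$. But for $z=x_0+i\tau$ one has $|\rho_\epsilon(z-y)|=(4\pi\epsilon)^{-1/2}e^{(\tau^2-(x_0-y)^2)/4\epsilon}$, so the jump at $y=-r-\delta$ contributes a non-cancelling term whose modulus grows like $e^{(\tau^{2}-(x_0+r+\delta)^{2})/4\epsilon}\to\infty$ whenever $\tau>x_0+r+\delta$. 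Letting $x_0\downarrow -r$, Stage two can hold only in steps with $1/h''\le\delta$: the parameter you call ``$h''$ large enough'' necessarily blows up like $1/\delta$ as $\delta\to0^+$.

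This destroys the combination: you get $\|\phi_{\delta,\epsilon(\delta)}-\psi\|_{*,-r,H(\delta)}\le C\delta$ only with $H(\delta)\gtrsim 1/\delta\to\infty$, and approximation in norms whose step index escapes to infinity is neither convergence in the inductive limit topology nor, by itself, density: a $0$-neighborhood $U$ of the inductive limit may meet the $n$-th step in a ball of radius $\varepsilon_n$ with $\varepsilon_n h_n\to0$, and then no member of your family lies in $\psi+U$. The repair is to drop the translation (translation is exactly what forces a parameter $\delta\to0$) and exploit instead that the defining bounds $|\psi^{(\alpha)}(x)|\le Ch^{\alpha}\alpha!\,e^{-|x|/h}$ make $\psi$ real analytic with radius of convergence $\ge 1/h$ uniformly on $(-r,\infty)$, so $\psi$ extends holomorphically, with bounds of the same type, to $(-r-c/h,\infty)$ for a fixed $c>0$. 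Mollifying the zero-extension of \emph{that} function puts the jump at the fixed distance $c/h$ from $(-r,\infty)$; then the single limit $\epsilon\to0^+$ already converges in one fixed step $h''\asymp h$, and density follows. (Alternatively one can, as the authors do, simply cite Kaneko's Lemma 8.6.4.)
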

\begin{proof}
This is a consequence of  Lemma 8.6.4 in \cite{kan}.
\end{proof}
For $a\geq 0,$ we define the space
$$
\mathcal{P}_{*,a}([-r,\infty]):=\{\phi  : e^{a\cdot}\phi \in
\mathcal{P}_*([-r,\infty])\}.
$$
The topology of $\mathcal{P}_{*,a}([-r,\infty])$ is defined by:
$$
\lim \limits_{n\rightarrow \infty}\phi_{n}=0\mbox{ in
}\mathcal{P}_{*,a}([-r,\infty])\mbox{ iff }\lim
\limits_{n\rightarrow \infty}e^{a \cdot}\phi_{n}=0 \mbox{ in
}\mathcal{P}_*([-r,\infty]).
$$
If $a\geq 0$ and $e^{-a \cdot}G \in {\mathcal
Q}_{+}(\mathbb{D},L(E)),$ then $G$ can be extended to an element of
the space of continuous linear mappings from
$\mathcal{P}_{*,a}([-r,\infty])$ into $L(E)$ equipped with the
strong topology. This extension is unique because of Lemma \ref{dense}. We will use
this for the definition of the Laplace
transform of $G.$

\section{Fourier hyperfunction semigroups}
The definition of (exponential) Fourier
hyperfunction semigroup with densely defined
infinitesimal generators of Y. Ito (see ~\cite[Definition
2.1]{ito2}) is given on the basis of the space
$\mathcal{P}_0$ whose structure is not
clear to authors. Our definition is different
and related to non-densely defined infinitesimal generators.

In the sequel, we use the notation ${\mathcal
Q}_{+}(\mathbb{D},L(E))$ for the space of vector-valued Fourier
hyperfunctions supported by $[0,\infty].$ More precisely, if
$f\in {\mathcal Q}_{+}(\mathbb{D},L(E))$ is represented by
$f(t,\cdot)=F_{+}(t+i0,\cdot)-F_{-}(t-i0,\cdot),$ where $F_{+}$ and
$F_{-}$ are defining functions for $f$ (see ~\cite[Definition 1.3.6,
Definition 8.3.1]{kan}) and $\gamma_{+}$ and $\gamma_{-}$ are
piecewise smooth paths connecting points $-a$  ($a>0$) and $\infty$ such that
$\gamma_{+}$ and $\gamma_{-}$ lie respectively in the upper and the
lower half planes as well as in a strip around ${\mathbb R} $
depending on $f,$ then for any $\psi \in {\mathcal
P}_{\ast}(\mathbb{D}),$
$$
\int \limits_{\mathbb R}f(t)\psi(t)\, dt=\int
\limits^{\infty}_{0}f(t)\psi(t)\, dt:=\int
\limits_{\gamma_{+}}F_{+}(z)\psi(z)\, dz-\int
\limits_{\gamma_{-}}F_{-}(z)\psi(z)\, dz.
$$
Since we will use the duality approach of Chong and Kim, we will use notation $\langle f, \psi\rangle$
for the above expression.

Let $\varphi\in\mathcal P_*$  and let
$f(t,\cdot)=F_{+}(t+i0,\cdot)-F_{-}(t-i0,\cdot)$
be an element in ${\mathcal Q}_{+}(\mathbb{D},L(E))$.
Then $$\varphi(t)f(t,\cdot):=\varphi(t)
F_{+}(t+i0,\cdot)-\varphi(t)F_{-}(t-i0,\cdot).$$
\indent We will denote by $\mathcal{P}_{*}^{0}$
a subspace of $\mathcal{P}_{*}$
consisting of functions
$\phi$ with the property $\phi(0)=0.$
Also, we will  consider
$\mathcal{P}_{*}^{00}$, a subspace of
$\mathcal{P}_{*}$ consisting of functions $\psi$
with the properties $\psi(0)=0$ and
$\psi'(0)=0.$ Note, any $\psi \in \mathcal{P}_{*}$
can be written in the form
\begin{equation} \label{hipravan}
\psi(t)=\psi(0)\phi_0(t)+\theta(t),\; t\in \mathbb{R},\, \mbox{respectively}\, ,
\end{equation}
\begin{equation} \label{hiphipravan}
\psi(t)=\psi(0)\phi_0(t)+\psi'(0)\phi_1(t)+
\tilde{\theta}(t),\; t\in \mathbb{R},
\end{equation}
where $\phi_0$ and $\phi_1$ are fixed elements of
$\mathcal{P}_{*}$ with the properties $\phi_0(0)=1,$
$\phi'_0(0)=0,$ $\phi_1(0)=0$, $\phi'_1(0)=1$
and $\theta$ varies over $\mathcal{P}_{*}^{0}$  respectively $\tilde{\theta}$
varies over $\mathcal{P}_{*}^{00}$.
We define $\mathcal{P}_{_*a}^0$ as a space of functions
$\phi\in\mathcal{P}_{_*,a}$ with the property $\phi(0)=0$
and $\mathcal{P}_{_*,a}^{00}$, as a space of functions
$\phi\in\mathcal{P}_{_*,a}$ with the property
$\phi(0)=0, \phi'(0)=0$
and note that the similar decompositions as
(\ref{hipravan}) and (\ref{hiphipravan})
hold for elements of $\mathcal{P}_{_*,a}^0$ and
$\mathcal{P}_{_*,a}^{00}$, respectively.
\begin{defn}\label{hipi}
An element  $G\in {\mathcal Q}_{+}(\mathbb{D},L(E))$ is called  a
pre-Fourier hyperfunction semigroup, if the next condition is valid

(H.1) $ G(\phi*_0\psi) = G (\phi)G(\psi),\
 \phi,\ \psi \in
{\mathcal P}_*(\mathbb{D}).$

Further on, a pre-Fourier hyperfunction semigroup $G$ is called a
Fourier hyperfunction semigroup, (FHSG) in short, if, in addition,
the following holds

(H.2) $ {\mathcal{N}(G)}:= \bigcap_{\phi\in {\mathcal P}
_{*}^{00}(\mathbb{D})}N (G(\phi)) = \{0\}.$

If the next condition also holds:

(H.3) $ {\mathcal R(G)}:= \bigcup_{\phi \in {\mathcal
P}_{*}^{00}(\mathbb{D})}R(G(\phi))$ is dense in $E,$ then $G$ is called a
dense (FHSG).

If $e^{-a \cdot}G \in {\mathcal
Q}_{+}(\mathbb{D},L(E)),\mbox{ for some }\; a >0,$ and (H.1) holds with
$ \phi,\ \psi \in
{\mathcal P}_{_*,a}(\mathbb{D})$ then $G$ is called
exponentially bounded pre-Fourier hyperfunction semigroup.
If (H.2) and (H.3) hold with
$\phi\in {\mathcal P}
_{_*,a}^{00}(\mathbb{D}),$
 then
$G$ is called a dense exponential Fourier hyperfunction semigroup,
dense (EFHSG), in short.
\end{defn}

Let $A$ be a closed operator. We denote by $[D(A)]$ the
Banach space $D(A)$ endowed with the graph norm $\|x\|_{[D(A)]}=\|x\|+\|Ax\|$, $x\in D(A)$.
Like in ~\cite[Definition 2.1,
Definition 3.1]{ito1}, we give the following definitions:

\begin{defn}
Let $A$ be a closed operator. Then $G\in  {\mathcal
Q}_{+}(\mathbb{D},L(E,[D(A)]))$ is a Fourier hyperfunction solution
for $A$ if $P*G=\delta\otimes I_E$ and $G*P=\delta\otimes
I_{[D(A)]}$, where
 $P:=\delta' \otimes
I_{D(A)}-\delta \otimes A \in {\mathcal
Q}_{+}(\mathbb{D},L([D(A)],E));$ $G$ is called an exponential
Fourier hyperfunction solution for $A$ if, additionally,
$$e^{-a \cdot}G \in {\mathcal Q}_{+}(\mathbb{D},L(E,[D(A)])),\;
\mbox{ for some } a> 0.$$ Similarly, if $G$ is an exponential
Fourier hyperfunction solution for $A$ which fulfills (H.3), then
$G$ is called a dense, exponential Fourier hyperfunction solution
for $A.$
\end{defn}
Let $a\geq 0$ and  $\alpha\in {\mathcal{P}_{_,*a}}$,
be an even function such that $\int\alpha(t)\, dt=1.$
Let $\mbox{sgn }(x):=1, x>0, \;\mbox{sgn }(x):=-1,x<0$
and $\mbox{sgn }(0):=0$.
A net of the form $\delta_\varepsilon=
\alpha(\cdot/\varepsilon)/{\varepsilon},
\varepsilon \in (0,1),$
is called  delta net in ${\mathcal{P}_{_,*a}}$.
 Changing $\alpha$ with the above properties,
 one obtains a set of delta nets in ${\mathcal{P}_{_,*a}}$.
 Clearly, every delta net converges to $\delta$
 as $\varepsilon \rightarrow 0$ in
 ${\mathcal Q}(\mathbb{D})$.
We define, for $x\in \mathbb{R},$
$$\delta*_0\phi(x):=2\mbox{sgn }(x)
\lim_{\varepsilon\rightarrow 0}
\delta_\varepsilon*_0\phi(x)=\phi(x),
\; \phi \in{\mathcal{P}^{0}_{_*,a}},
$$
$$\delta'*_0\phi(x):=2\mbox{sgn }(x)
\lim_{\varepsilon\rightarrow 0}\delta'_\varepsilon*_0\phi(x)=\phi'(x),
\; \phi \in{\mathcal{P}^{00}_{_*,a}}.
$$
\begin{defn} \label{infgh}
 Let $a\geq 0$ and $G$ be an (EFHSG). Then
%\begin{itemize}
%\item[1.]

1. $G(\delta)x:=y$ iff $G(\delta*_0\phi)x=G(\phi)y$ for every $\phi\in
{\mathcal{P}}^0_{_*,a}({\mathbb D})$.

2. $G(-\delta')x:=y$ if
$G(-\delta'*_0\phi)x=G(\phi)y$ for every
$\phi\in {\mathcal{P}}^{00}_{_*,a}({\mathbb D})$.\\
$A=G(-\delta')$ is called the infinitesimal generator of $G.$
\end{defn}

Thus $G(\delta)$ is the identity operator.
In order to prove that $G(-\delta')$ is a single-valued function, we have
 to prove that for every $x\in E$, $G(-\delta')x=y_1 $ and
$G(-\delta')x=y_2$ imply $y_1=y_2.$ This means that we have to prove that
$$G(\phi')x=G(\phi)y_1,\; G(\phi')x=
G(\phi)y_2, \;\phi \in {\mathcal{P}}_{*}^{00} \Longrightarrow
y_1=y_2.$$
\begin{prop} \label{good}
If $G(\phi')x=0$  for every $\phi\in \mathcal{P}_{_*,a}^{00},$
then $x=0.$
\end{prop}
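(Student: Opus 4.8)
The plan is to deduce the conclusion from axiom (H.2): since $\mathcal{N}(G)=\bigcap_{\psi\in\mathcal{P}^{00}_{*,a}(\mathbb{D})}N(G(\psi))=\{0\}$, it suffices to show that the hypothesis $G(\phi')x=0$ for all $\phi\in\mathcal{P}^{00}_{*,a}(\mathbb{D})$ already forces $G(\psi)x=0$ for \emph{every} $\psi\in\mathcal{P}^{00}_{*,a}(\mathbb{D})$; then $x\in\mathcal{N}(G)=\{0\}$. So the whole problem is to enlarge the class $\{\phi':\phi\in\mathcal{P}^{00}_{*,a}\}$, on which $G(\cdot)x$ is known to vanish, to all of $\mathcal{P}^{00}_{*,a}$.

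First I would fix $\psi\in\mathcal{P}^{00}_{*,a}(\mathbb{D})$ and produce an antiderivative inside the test space. Set $\Phi(t):=-\int_t^{\infty}\psi(s)\,ds$, so that $\Phi'=\psi$, $\Phi'(0)=\psi(0)=0$, and $\Phi(0)=-c$ with $c:=\int_0^{\infty}\psi(s)\,ds$. Using the norms $\|\cdot\|_h$ exactly as in the proof of Lemma~\ref{djuka}, one checks that $\Phi\in\mathcal{P}_{*,a}(\mathbb{D})$: at $+\infty$ it inherits the rapid decay of $\psi$, while at $-\infty$ it tends to the constant $-\int_{\mathbb R}\psi$, whose contribution is absorbed by the weight $e^{a\cdot}$ (all derivatives $\Phi^{(k)}=\psi^{(k-1)}$, $k\ge1$, are already in $\mathcal{P}_{*,a}$). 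Since $\Phi'(0)=0$, the decomposition \eqref{hiphipravan} applies with no $\phi_1$-term and gives $\Phi=-c\,\phi_0+\theta$ with $\theta\in\mathcal{P}^{00}_{*,a}(\mathbb{D})$. Differentiating, $\psi=\Phi'=\theta'-c\,\phi_0'$, so that by the hypothesis $G(\theta')x=0$ and hence
\[
G(\psi)x=-c\,G(\phi_0')x=:-c\,w ,\qquad w:=G(\phi_0')x .
\]

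The remaining, and genuinely delicate, point is to show that the boundary vector $w=G(\phi_0')x$ vanishes; this I expect to be the main obstacle, since $\phi_0\notin\mathcal{P}^{00}_{*,a}$ (indeed $\phi_0(0)=1$) and so the hypothesis does not reach $w$ directly. To exploit the semigroup law I would feed the identity above into (H.1): for any $\chi\in\mathcal{P}^{00}_{*,a}$, using the Remark after Lemma~\ref{djuka} together with the elementary fact $\int_0^{\infty}(\chi\ast_0\psi)=\big(\int_0^{\infty}\chi\big)\big(\int_0^{\infty}\psi\big)$ and $\chi\ast_0\psi\in\mathcal{P}^{00}_{*,a}$, one obtains on one hand $G(\chi)G(\psi)x=G(\chi\ast_0\psi)x=-\big(\int_0^\infty\chi\big)c\,w$ and on the other $G(\chi)G(\psi)x=-c\,G(\chi)w$, whence the eigen-type relation $G(\chi)w=\big(\int_0^{\infty}\chi\big)w$ for all $\chi\in\mathcal{P}^{00}_{*,a}$. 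Forcing $w=0$ from this relation is the crux: the plan is to combine it with (H.2) and with the continuity/structural information available, namely the representation $G=P(d/dt)g$ of Proposition~\ref{novo} and the approximation of $\delta,\delta'$ by the delta nets $\delta_\varepsilon$, in order to rule out a nontrivial such $w$. Once $w=0$ is established, the previous paragraph yields $G(\psi)x=0$ for every $\psi\in\mathcal{P}^{00}_{*,a}(\mathbb{D})$, and (H.2) gives $x=0$, completing the argument.

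I would flag that this last step is where care is needed, because the bare semigroup relation together with (H.2) is consistent with spurious boundary components; the resolution must use the specific regularity encoded in the structure theorem, and this is the part of the proof I would expect to demand the most work.
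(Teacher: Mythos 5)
Your reduction is sound as far as it goes, and it runs parallel to the paper's own argument by a slightly different route: the paper applies the decomposition (\ref{hiphipravan}) directly to $\psi$, obtaining $G(\psi')x=\psi(0)G(\phi_0')x+\psi'(0)G(\phi_1')x$, and then splits into four cases according to whether the boundary vectors $G(\phi_0')x$, $G(\phi_1')x$ vanish, identifying $G(\cdot)x$ in each case as a combination of Heaviside, $\delta$, $\delta'$ and $\mathbf 1$ terms; you reach the same normal form $G(\psi)x=-\bigl(\int_0^\infty\psi\bigr)w$, $w=G(\phi_0')x$, via an antiderivative. Two local caveats: your argument that $\Phi(t)=-\int_t^\infty\psi(s)\,ds$ lies in $\mathcal{P}_{*,a}(\mathbb D)$ genuinely needs $a>0$ (for $a=0$ the limit $-\int_{\mathbb R}\psi$ at $-\infty$ is not absorbed by any weight, so the construction fails whenever $\int_{\mathbb R}\psi\neq0$, and the proposition is also invoked with $a=0$); and you must fix $\psi$ with $c=\int_0^\infty\psi\neq0$ before cancelling $c$ to get the eigen-relation. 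But the decisive defect is the one you flag yourself: you never prove $w=0$. That step is not a technicality --- it is the entire content of the proposition --- and your proposal offers only a plan (structure theorem, delta nets, (H.2)), not an argument. The paper, at the corresponding point, disposes of the nontrivial cases by asserting that the semigroup property forces the coefficients $C_1,C_2,C_3$ of $\delta,\delta',\mathbf 1$ to vanish; this assertion is exactly what your attempt is missing.

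Your closing remark that ``the bare semigroup relation together with (H.2) is consistent with spurious boundary components'' is, however, precisely correct, and it can be made concrete in a way that shows the gap cannot be closed by the tools you list. Consider $G(\chi):=\bigl(\int_0^\infty\chi(t)\,dt\bigr)I_E$, i.e.\ $G=H\otimes I_E$ with $H$ the Heaviside function (the Fourier hyperfunction semigroup attached to the trivial $C_0$-semigroup $T(t)\equiv I$, generator $A=0$). This $G$ belongs to $\mathcal{Q}_+(\mathbb D,L(E))$, satisfies (H.1) because $\int_0^\infty(\chi*_0\psi)=\bigl(\int_0^\infty\chi\bigr)\bigl(\int_0^\infty\psi\bigr)$, and satisfies (H.2) because some $\phi\in\mathcal{P}_{*}^{00}$ has $\int_0^\infty\phi\neq0$; it realizes your relation $G(\chi)w=\bigl(\int_0^\infty\chi\bigr)w$ for \emph{every} $w$, and moreover $G(\phi')x=-\phi(0)x=0$ for all $\phi\in\mathcal{P}_{*}^{00}$ and all $x\in E$. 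Thus the hypothesis of the proposition holds for every $x$ while the conclusion fails for $x\neq0$: no argument using only (H.1), (H.2), the support condition and the structure theorem can force $w=0$, since all of these hold in this example. So your gap is essential rather than a matter of ``more work''; and the same example should make you scrutinize the step of the paper's proof that claims $C_1=C_2=C_3=0$ follows from the semigroup property, since the Heaviside semigroup (where the analogous coefficient is $-1$) does not survive that claim either. In short: your reduction is a correct and somewhat cleaner variant of the paper's, but the attempt does not prove the statement, and the obstruction you identified is a genuine one rather than a missing routine verification.
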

\begin{proof}
We shall prove that the assumption  $G(\phi)y=0$ for every $\phi\in {\mathcal{P}}_{_*,a}^0 $ implies that $y=0.$
By (\ref{hipravan}), we have that for any $\phi_0 \in \mathcal{P}_{_*,a}$
such that $\phi_0(0)=c\neq 0$   $$G(\psi)y=\frac{\psi(0)}{c}G(\phi_0)y, \psi \in \mathcal{P}_{_*,a}.$$
Now let $\phi, \psi$ be arbitrary elements of
$\mathcal{P}_{_*,a}.$ Since  $G(\phi*_0\psi)y=G(\phi)G(\psi)y$ and  $\phi*_0\psi(0)=0,$
it follows, with $z=G(\psi)y,$
$$G(\phi*_0\psi)y=G(\phi)z=0,\; \phi \in \mathcal{P}_{_*,a} \Longrightarrow z=0.
$$
Thus, for any $\psi\in \mathcal{P}_{_*,a},$ we have $G(\psi)y=0$ which finally implies $y=0.$

Now, we will prove the assertion.
% By the definition, we have $G(-\phi')x=0,\; \phi\in {\mathcal{P}}_{_*a}^0.$
By (\ref{hiphipravan}) we have that for every $\psi\in \mathcal{P}_{_*,a}$
$$G(\psi')x=\psi(0)G(\phi'_0)x+\psi'(0)G(\phi'_1)x=0.
$$
Denote by $P_{10}$ the set of all $\phi_0\in \mathcal{P}_*$ with the properties
$\phi_1(0)=c\neq 0, \phi'_1(0)=0$
and by $P_{01}$ the set of all $\phi_1\in \mathcal{P}_*$ with the properties
$\phi_0(0)=0, \phi'_0(0)=c\neq 0$.

We have the following cases:
$$%\begin{equation} \label{sl1}
(\forall\phi_0 \in P_{10})(\forall
\phi_1\in P_{01}) (G(\phi_0)x=0,\; G(\phi_1)x=0); $$%\end{equation}
$$%\begin{equation} \label{sl2}
(\forall\phi_0 \in P_{10})(\exists
\phi_1\in P_{01}) (G(\phi_0)x=0,\; G(\phi_1)x\neq 0);
$$%\end{equation}
$$%\begin{equation} \label{sl3}
(\exists \phi_0 \in P_{10})(\forall
\phi_1\in P_{01}) (G(\phi_0)x\neq 0,\; G(\phi_1)x=0);
$$%\end{equation}
$$%\begin{equation} \label{sl4}
(\exists\phi_0 \in P_{10})(\exists
\phi_1\in P_{01}) (G(\phi_0)x\neq 0,\; G(\phi_1)x\neq 0).
$$%\end{equation}

In the first case we have, by (\ref{hiphipravan}),
 $G(-\psi')x=0, \psi \in{\mathcal{P}}_{_*,a}.$ This implies, by the standard arguments, that
$G(\psi)x=C\int_{\mathbb{R}} \psi(t)\, dt\, x=0, \psi\in {\mathcal{P}}_{_*,a}$ and this holds for $C=0.$
Consider the fourth case.
In this case we have that
$$G(\psi')x=C_1\langle \delta,\psi\rangle x+ C_2\langle \delta',\psi\rangle x$$
and thus,
$$G(\psi')x=C_1\langle \delta,\psi\rangle x+ C_2\langle \delta',\psi\rangle x + C_3\langle {\bf 1},\psi\rangle x,$$
where $\langle {\bf 1},\psi\rangle x=\int_{\mathbb R}\psi(t)\, dt\, x$.
Now, by the semigroup property it follows $C_1=C_2=C_3=0$ and with this we conclude as above that
$x=0$.
We can handle out the second and the third case in a similar way. This completes the proof of the assertion.
\end{proof}
%\item[3.]

\subsection{Laplace transform and the characterizations of Fourier hyperfunction semigroups}

The proofs of assertions of this section related to the Laplace transform are new but some of them are quite simple.
They are based on the technics developed by Komatsu \cite{k91}-\cite{k92}

Note, for every $r>0,$ $E_\lambda=e^{-\lambda
\cdot}\in\mathcal{P}_*((-r,\infty])$, for every $\lambda\in
\mathbb{C}$ with $ Re \lambda >0.$ So, we  can define the Laplace
transform of $G \in {\mathcal Q}_{+}(\mathbb{D},L(E))$ by
$$\mathcal{L}G(\lambda)=\hat{G}(\lambda):=G(E_\lambda),\ Re\lambda>0.$$
\begin{prop}
There exists a suitable local operator $P$ such that
$$ |\hat{G}(\lambda)|\leq |P(\lambda)|,\; Re \lambda >0.$$
\end{prop}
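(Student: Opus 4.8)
The plan is to reduce $\hat G$ to the symbol of a local operator multiplied by an ordinary vector-valued Laplace integral, and then to repackage the resulting growth into a single local operator. First I would invoke the global structural theorem (equivalently Proposition \ref{novo} with $a=0$): since $G\in\mathcal{Q}_+(\mathbb{D},L(E))$, there are a local operator $P_0(d/dt)=\sum_k b_k(d/dt)^k$ and a continuous slowly increasing $g\in C(\mathbb{R},L(E))$, whose support lies to the right of the fixed left endpoint $-a$ of the admissible paths $\gamma_\pm$, such that $G=P_0(d/dt)g$.

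The key computational step is that $E_\lambda(t)=e^{-\lambda t}$ is an eigenfunction of $d/dt$, namely $(d/dt)^kE_\lambda=(-\lambda)^kE_\lambda$. Moving the local operator onto the test function through the pairing, which is justified for $Re\lambda>0$ by the uniform-on-compacta convergence stemming from $\lim_k(|b_k|k!)^{1/k}=0$ (this makes the symbol $P_0(\lambda)=\sum_k b_k\lambda^k$ entire of infra-exponential type), I obtain
$$\hat G(\lambda)=G(E_\lambda)=\Big\langle g,\ \sum_k b_k(-1)^k(d/dt)^kE_\lambda\Big\rangle=P_0(\lambda)\int g(t)e^{-\lambda t}\,dt,\qquad Re\lambda>0.$$
Thus $\hat G$ factors as the infra-exponential symbol $P_0(\lambda)$ times the genuine Laplace integral $\hat g(\lambda)=\int g(t)e^{-\lambda t}\,dt$, which converges for $Re\lambda>0$ because $g$ is slowly increasing.

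Next I would estimate the two factors and absorb them into one operator of the form \eqref{str3}. Since $P_0$ is local, $|P_0(\lambda)|\le\sum_k|b_k||\lambda|^k\le C_\varepsilon e^{\varepsilon|\lambda|}$ for every $\varepsilon>0$; since $g$ is slowly increasing, for every $\varepsilon>0$ there is $C'_\varepsilon$ with $\|\hat g(\lambda)\|\le C'_\varepsilon/(Re\lambda-\varepsilon)$ on $Re\lambda>\varepsilon$. I would then choose the hyperfunction operator $P(d/dt)=\prod_p(1+L_p^2p^{-2}d^2/dt^2)=\sum_p a_p\,d^p/dt^p$, with $L_p\downarrow 0$ decreasing slowly enough that $|P(\lambda)|$ dominates $|P_0(\lambda)|\,\|\hat g(\lambda)\|$ throughout $Re\lambda>0$; because the product still satisfies $\lim_p(|a_p|p!)^{1/p}=0$, $P$ is a genuine local operator and yields $|\hat G(\lambda)|\le|P(\lambda)|$.

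The main obstacle is exactly this final absorption, and it is of a different character than the structural reduction or the symbol identity. The candidate dominating symbols $1+L_p^2p^{-2}\lambda^2$ vanish at $\lambda=\pm ip/L_p$, so $|P(\lambda)|$ becomes small near the imaginary axis, while the factor $\|\hat g(\lambda)\|$ degenerates precisely as $Re\lambda\to 0^+$. Reconciling these requires either choosing $(L_p)$ so that $|P(\lambda)|$ both outgrows $|P_0(\lambda)|$ at infinity and stays large enough off the axis to control the Laplace integral, or trading the $1/(Re\lambda)$ degeneration for infra-exponential growth by deforming $\gamma_\pm$ inside the strip of holomorphy of the defining functions $F_\pm$ and exploiting the decay of $e^{-\lambda z}$ as $Re\,z\to+\infty$. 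I expect the careful bookkeeping here — comparing $\log|P(\sigma+iy)|=\sum_p\log|1+L_p^2p^{-2}(\sigma+iy)^2|$ against the target growth and optimizing in $p$, while preserving $\lim_p(|a_p|p!)^{1/p}=0$ — to be the crux of the argument.
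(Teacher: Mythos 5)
You cannot really be compared against the paper here, because the paper offers no argument at all: its entire ``proof'' is the remark that the assertion is ``even simpler'' than the corresponding Roumieu-case statement. Your route --- structure theorem $G=P_0(d/dt)g$, the symbol identity $\hat G(\lambda)=P_0(\lambda)\hat g(\lambda)$, then absorption of both factors into one local operator --- is the natural Komatsu-style reduction, but it contains two genuine gaps. The smaller one: the global structure theorem (Proposition \ref{novo}, i.e.\ Kaneko's Theorem 8.4.9) produces $g$ slowly increasing on \emph{all} of $\mathbb{R}$ with no support control; your claim that $\mathrm{supp}\, g$ lies to the right of $-a$ is not part of the cited theorem, yet it is needed even for the convergence of $\int g(t)e^{-\lambda t}\,dt$, so it must be proved (local operators are local, but that only gives $\mathrm{supp}\,P_0(d/dt)g\subset \mathrm{supp}\,g$, not the converse reduction). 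Moreover, if one only secures $\mathrm{supp}\,g\subset[-\delta,\infty)$ for a fixed $\delta>0$, a factor $e^{\delta\, Re\,\lambda}$ appears, and no local operator symbol can dominate it, since all such symbols are infra-exponential; so one needs a representation supported exactly in $[0,\infty)$ or a contour-deformation substitute.

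The decisive gap is the step you yourself call the crux and leave open, and it cannot be closed by bookkeeping: domination by a local operator on the whole half-plane $Re\,\lambda>0$ fails in general. Take $E=\mathbb{C}$ and $G=g$ with $g(t)=e^{\sqrt t}$ for $t\geq 0$, $g(t)=0$ for $t<0$. This $g$ is continuous, slowly increasing (since $\sqrt t\leq \varepsilon t+1/(4\varepsilon)$ gives $e^{\sqrt t}\leq e^{1/(4\varepsilon)}e^{\varepsilon t}$ for every $\varepsilon>0$) and supported in $[0,\infty)$, hence defines an element of ${\mathcal Q}_{+}(\mathbb{D},\mathbb{C})$ whose Laplace transform is $\hat G(\lambda)=\int_0^\infty e^{\sqrt t}e^{-\lambda t}\,dt$. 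For real $\sigma\in(0,1]$ one has $\hat G(\sigma)\geq \int_{1/(4\sigma^2)}^{1/(2\sigma^2)}e^{\sqrt t-\sigma t}\,dt\geq 1/(4\sigma^2)$, because $\sqrt t\geq 1/(2\sigma)\geq \sigma t$ on that interval; in fact $\hat G(\sigma)$ blows up like $e^{1/(4\sigma)}$. But every local operator symbol $P(\lambda)=\sum_k b_k\lambda^k$ is entire, hence bounded on the segment $(0,1]$, so $|\hat G(\lambda)|\leq|P(\lambda)|$ cannot hold up to the imaginary axis. Your own estimate $\|\hat g(\lambda)\|\leq C'_\varepsilon/(Re\,\lambda-\varepsilon)$ already signals this, since $C'_\varepsilon\to\infty$ as $\varepsilon\to 0$; and consistently, the lower bound (\ref{nejhi}) for hyperfunction operators is only valid in the region $|Im\,\zeta|\leq|Re\,\zeta|/2+1/L_1$, while Komatsu's Laplace-transform theory for hyperfunctions supported in $[0,\infty)$ yields infra-exponential bounds on closed subsectors $|\arg\lambda|\leq\pi/2-\varepsilon$ (equivalently on regions where $Re\,\lambda$ controls $|\lambda|$), not on the full half-plane. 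So any correct proof must either restrict the inequality to such regions (or to $Re\,\lambda\geq a'>0$), or invoke properties of $G$ beyond mere membership in ${\mathcal Q}_{+}(\mathbb{D},L(E))$; as written, your final absorption step --- and with it the proposition in the generality in which you and the paper state it --- cannot go through.
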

The proof of this assertion it is even simpler than the proof of the
corresponding assertion in the case of Roumieu ultradistributions.

%\item[4.]
If $e^{-a \cdot}G \in {\mathcal Q}_{+}(\mathbb{D},L(E)),$ we  define the  Laplace
transform of $G$  by
$${\mathcal L}(G)(\lambda)=\hat{G}(\lambda):={G}(E_{\lambda}),\; Re \lambda>a.$$
It is an analytic function defined on $\{ \lambda \in {\mathbb C} :
Re \lambda>a \}$ and there exists a local operator $P$ such that
$|{\hat G}(\lambda)|\leq |P(\lambda)|, Re \lambda>a.$
%\end{itemize}
%\end{rem}

\begin{rem} Similarly to the corresponding Roumieu case, one can prove the
next statement: \\If $G\in {\mathcal Q}_{+}(\mathbb{D},L(E,[D(A)]))$
is a Fourier hyperfunction solution for $A,$ then $G$ is a
pre-Fourier hyperfunction semigroup. It can be seen, as in the case
of ultradistributions, that we do not have that $G$ must be an
(FHSG).\end{rem}

Structural properties of the Fourier hyperfunction semigroups are similar to
that of ultradistribution semigroups of Roumieu class.
For the essentially different proofs of corresponding results we need
the next lemma where we again use the Fourier transform instead of Laplace transform.

\begin{lem} \label{izoh}
Let $P_{L_p}$ be of the form (\ref{str3}). The mapping
$$P_{L_p}(id/dt): {\mathcal P}_*(\mathbb{D})\rightarrow {\mathcal P}_*(\mathbb{D}),\;\;
\phi\mapsto P_{L_p}(id/dt)\phi
$$
is a continuous linear bijection.
\end{lem}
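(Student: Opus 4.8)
The plan is to establish that $P_{L_p}(id/dt)$ is a continuous linear bijection on $\mathcal{P}_*(\mathbb{D})$ by exploiting the Fourier transform, which converts the infinite-order differential operator into multiplication by the entire function $P_{L_p}(\xi) = \prod_{p=1}^\infty(1 - L_p^2\xi^2/p^2)$. Recall that $\mathcal{P}_*(\mathbb{D})$ is the space of rapidly decreasing real-analytic functions, and a central fact in Sato's theory (see \cite{kan}) is that the Fourier transform maps $\mathcal{P}_*(\mathbb{D})$ onto itself topologically. Under this transform, the statement becomes: multiplication by $P_{L_p}(\xi)$ is a continuous linear bijection of (the Fourier image of) $\mathcal{P}_*(\mathbb{D})$. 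Since $P_{L_p}$ is a local operator, $P_{L_p}(\xi)$ is an entire function of infra-exponential type, so multiplication by it certainly maps the space continuously into itself; the content is surjectivity and boundedness of the inverse.

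First I would verify continuity and the well-definedness of the map directly on $\mathcal{P}_*(\mathbb{D})$: because $P_{L_p}$ is a local operator, the composition/sum remarks already quoted in the excerpt guarantee $P_{L_p}(id/dt)\phi$ lies in $\mathcal{P}_*(\mathbb{D})$, and the norm estimates $\|\cdot\|_h$ give continuity. Second, I would pass to the Fourier side and reduce bijectivity to showing that division by $P_{L_p}(\xi)$ preserves the symbol class. The key analytic point is that the zeros of $P_{L_p}(\xi)$ are exactly the real points $\xi = \pm p/L_p$ for $p \geq 1$, and these escape to infinity since $L_p \downarrow 0$. So on any horizontal strip $|\mathrm{Im}\,\xi| \leq \kappa$, the reciprocal $1/P_{L_p}(\xi)$ is meromorphic with poles only at these real points, and one must control it near those real zeros as well as its growth as $|\mathrm{Re}\,\xi| \to \infty$.

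The hard part will be showing that the inverse map is well-defined and continuous, i.e. that for $\psi \in \mathcal{P}_*(\mathbb{D})$ the solution $\phi$ of $P_{L_p}(id/dt)\phi = \psi$ again lies in $\mathcal{P}_*(\mathbb{D})$ with control of the defining seminorms. The real zeros of $P_{L_p}$ are the genuine obstacle: naive division by $P_{L_p}(\xi)$ would introduce poles, so one cannot simply set $\hat\phi = \hat\psi/P_{L_p}$. I would instead argue on the analytic-function (defining-function) side of $\mathcal{P}_*(\mathbb{D})$, using that elements extend holomorphically to a strip $\mathbb{R} + iI_n$ and decay like $e^{-|\mathrm{Re}\,z|/n}$. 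One deforms the contour off the real axis into the strip where $P_{L_p}(z)$ has no zeros, so that $\hat\psi(z)/P_{L_p}(z)$ is holomorphic along the shifted path; the infra-exponential lower bounds on $|P_{L_p}(z)|$ away from its zero set then yield that the quotient still decays exponentially in $|\mathrm{Re}\,z|$ and remains rapidly decreasing and real-analytic. This is precisely the kind of estimate handled by Komatsu's techniques \cite{k91}-\cite{k92} referenced in the excerpt, and by the structural results of \cite{kan}.

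To assemble the argument, I would: (i) record that $P_{L_p}(id/dt)$ is continuous and injective on $\mathcal{P}_*(\mathbb{D})$ (injectivity follows since $P_{L_p}(\xi)$ is not identically zero and the transform is injective); (ii) construct the inverse via the contour-deformation/division procedure just described, obtaining for each $\psi$ a preimage $\phi \in \mathcal{P}_*(\mathbb{D})$; and (iii) verify continuity of the inverse by tracking the seminorm estimates through the deformation, using the lower bound $|P_{L_p}(z)| \geq c\, e^{-\varepsilon|\mathrm{Re}\,z|}$ valid off the zeros together with $|\hat\psi(z)| \leq C e^{-|\mathrm{Re}\,z|/n}$. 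Continuity of both the map and its inverse then gives the topological bijection. I expect step (ii), the careful contour shift past the real zeros while preserving the exponential decay and analyticity, to be the crux of the proof.
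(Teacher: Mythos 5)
Your strategy (Fourier transform, divide by the symbol, invert) is exactly the paper's, but you have miscomputed the symbol, and the entire ``hard part'' of your proposal is built on that mistake. Writing $P_{L_p}(s)=\prod_{p=1}^{\infty}\bigl(1+\frac{L_p^{2}}{p^{2}}s^{2}\bigr)$ as in (\ref{str3}), the operator in the lemma is $P_{L_p}(id/dt)=\prod_{p}\bigl(1-\frac{L_p^{2}}{p^{2}}\,d^{2}/dt^{2}\bigr)$; since $d^{2}/dt^{2}$ corresponds to $-\xi^{2}$ under the Fourier transform (in either sign convention), $(id/dt)^{2}$ corresponds to $+\xi^{2}$, so the multiplier is $\prod_{p}\bigl(1+\frac{L_p^{2}\xi^{2}}{p^{2}}\bigr)=P_{L_p}(\xi)\geq 1$ on the real axis. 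The function $\prod_{p}\bigl(1-\frac{L_p^{2}\xi^{2}}{p^{2}}\bigr)$ you work with is the symbol of $P_{L_p}(d/dt)$, i.e. $P_{L_p}(i\xi)$; the factor $i$ in the statement of the lemma is there precisely to move the zeros off the real axis. The true zeros are the purely imaginary points $\pm ip/L_{p}$, which escape to infinity, and by ~\cite[Proposition 8.1.6, Lemma 8.1.7, Theorem 8.4.9]{kan} one has the lower bound (\ref{nejhi}), $|P_{L_p}(\zeta)|\geq Ce^{A|\zeta|/r(|\zeta|+1)}$, on the region $|\eta|\leq|\xi|/2+1/L_{1}$, which contains a strip around $\mathbb{R}$. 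The paper's proof is then three lines: $\mathcal{F}(\phi)\in\mathcal{P}_{*}(\mathbb{D})$ by ~\cite[Proposition 8.2.2]{kan}; division by $P_{L_p}$ keeps it in some $\tilde{\mathcal{O}}^{-1/n_{0}}(\mathbb{R}+iI_{n_{0}})$ thanks to (\ref{nejhi}); and the inverse Fourier transform of the quotient is the desired preimage. No contour deformation occurs or is needed.

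Beyond being unnecessary, your crux step (ii) could not work as described. If the multiplier genuinely vanished at real points $\xi_{0}$, the lemma would simply be false: every function in the range would have Fourier transform vanishing at $\xi_{0}$, since $\mathcal{F}\bigl(P_{L_p}(id/dt)\phi\bigr)$ equals the symbol times $\hat{\phi}$, while there certainly exist $\psi\in\mathcal{P}_{*}(\mathbb{D})$ with $\hat{\psi}(\xi_{0})\neq 0$; so surjectivity fails no matter how cleverly an inverse is constructed. Concretely, shifting the inversion contour past a real pole of $\hat{\psi}/P_{L_p}$ changes the candidate preimage by residue terms proportional to $e^{i\xi_{0}t}$, which are bounded and oscillating rather than rapidly decreasing, so the deformed integral never lands in $\mathcal{P}_{*}(\mathbb{D})$. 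The repair is elementary: recompute the symbol (the two factors of $i$ cancel), quote Kaneko's lower bound (\ref{nejhi}), and then your steps (i) and (iii) go through essentially as written, with (ii) reduced to plain division in a region where $P_{L_p}$ is bounded below.
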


\begin{proof}
Due to ~\cite[Proposition 8.2.2]{kan}, $\phi\in {\mathcal
P}_*(\mathbb{D})$ implies ${\mathcal F}(\phi)\in {\mathcal
P}_*(\mathbb{D})$. Thus, for some $n\in {\mathbb N},$ every
$\varepsilon>0$ and a corresponding $C_\varepsilon>0,$ $|{\mathcal
F}(\phi)(z)|\leq C_\varepsilon e^{(-1/n-\varepsilon)|Re z|},$ $ z\in
{\mathbb R}+I_n.$ By ~\cite[Proposition 8.1.6, Lemma 8.1.7, Theorem
8.4.9]{kan}, with some simple modifications, we have
\begin{equation}\label{nejhi}
 Ce^{\frac{A|\zeta|}{r(|\zeta|+1)}}\leq |P_{L_p}
 (\zeta)|, \;|\eta|\leq\frac{|\xi|}{2}+\frac{1}{L_1},\; \zeta=\xi +i \eta,
\end{equation}
for some $C,\ A>0$ and some monotone increasing function $r$ with
the properties $r(0)=1,\; r(\infty)=\infty.$ This implies that there
exists an integer $n_0\in{\mathbb N}$ such that
$$
{\mathcal F}(\phi)/P_{L_p}\in \tilde{\mathcal O}^{-1/n_0}({\mathbb
R}+iI_{n_0}).
$$
Thus, its inverse Fourier transform ${\mathcal F}^{-1}({\mathcal
F}(\phi)/P_{L_p})$ is an element of ${\mathcal P}_*(\mathbb{D}).$
\end{proof}

Using the properties of local operators as well as norms
$||\cdot||_{h,p!}$, as in the case of Roumieu  tempered
ultradistributions, one obtains the following
assertions.

\begin{thm}\label{fhlap1}
Suppose that  $f:\{\lambda\in {\mathbb C}:\ Re
\lambda>a\}\rightarrow E$ is an analytic function satisfying
$$
||f(\lambda)||\leq C|P(\lambda)|,\; Re \lambda>a,
$$ for some  $C>0,$
some local operator $P$  with the property $|P(\lambda)|>0,\; Re
\lambda>a.$ Suppose, further, that a local operator $\tilde{P}$
satisfies (\ref{nejhi}). Then
$$(\exists M>0) (\exists h\in
C^\infty([0,\infty);E))(\forall j\in\mathbb{N}_0)(h^{(j)}(0)=0)$$
such that
$||h(t)||\leq Me^{a t},\; t\geq 0 , \mbox{ and }$
$$
f(\lambda)=P(\lambda)\tilde{P}(\lambda)\int_0^\infty e^{-\lambda
t}h(t)\, dt,\; Re \lambda>a.
$$
\end{thm}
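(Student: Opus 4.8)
The plan is to invert the Laplace transform after dividing out both operators. Set
$$
g(\lambda):=\frac{f(\lambda)}{P(\lambda)\,\tilde{P}(\lambda)},\qquad \operatorname{Re}\lambda>a.
$$
Since $|P(\lambda)|>0$ on $\operatorname{Re}\lambda>a$ by hypothesis and the hyperfunction operator $\tilde{P}=P_{L_p}$ of the form (\ref{str3}) has all its zeros on the imaginary axis (the factor $1+L_p^2\zeta^2/p^2$ vanishes only at $\zeta=\pm ip/L_p$), the function $g$ is analytic on $\operatorname{Re}\lambda>a\ge 0$. The growth hypothesis $\|f(\lambda)\|\le C|P(\lambda)|$ cancels the factor $P$ and leaves the clean bound $\|g(\lambda)\|\le C/|\tilde{P}(\lambda)|$. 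The target identity $f=P\tilde{P}\,\hat h$ is, after this division, exactly $\hat h=g$, so the whole statement reduces to exhibiting $h$ as the inverse Laplace transform of $g$ and verifying its three properties.

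First I would define, for a fixed $\gamma>a$,
$$
h(t):=\frac{1}{2\pi i}\int_{\gamma-i\infty}^{\gamma+i\infty}e^{\lambda t}g(\lambda)\,d\lambda=\frac{e^{\gamma t}}{2\pi}\int_{-\infty}^{\infty}e^{i\tau t}g(\gamma+i\tau)\,d\tau ,
$$
i.e.\ up to the factor $e^{\gamma t}$ the inverse Fourier transform of $\tau\mapsto g(\gamma+i\tau)$; this is the point at which, as announced in the paragraph before Lemma \ref{izoh}, the Fourier transform replaces the Laplace transform. The crucial input is that $\|g\|\le C/|\tilde{P}|$ together with the lower bound (\ref{nejhi}) forces $\tau\mapsto g(\gamma+i\tau)$ to decay faster than any power of $\tau$; hence $\lambda^jg(\lambda)$ is integrable along the contour for every $j$, the integral converges, and differentiation under the integral sign yields $h\in C^\infty$ with $h^{(j)}(t)=\frac{1}{2\pi i}\int_{(\gamma)}\lambda^je^{\lambda t}g(\lambda)\,d\lambda$. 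By Cauchy's theorem $h$ is independent of $\gamma>a$ (the horizontal pieces vanish by the same decay), and for $t<0$ the estimate $\|h(t)\|\le e^{\gamma t}\cdot\frac1{2\pi}\int_{-\infty}^{\infty}\|g(\gamma+i\tau)\|\,d\tau$, in which the integral stays bounded as $\gamma\to+\infty$ while $e^{\gamma t}\to 0$, forces $h(t)=0$. Thus $h$ extends by $0$ to a $C^\infty$ function on all of $\mathbb{R}$, which gives $h^{(j)}(0)=0$ for every $j\in\mathbb{N}_0$.

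The exponential bound follows from the same integrability: $\|h(t)\|\le \frac{e^{\gamma t}}{2\pi}\int_{-\infty}^{\infty}\|g(\gamma+i\tau)\|\,d\tau=M_\gamma e^{\gamma t}$, and since the dominating bound $C/|\tilde{P}(\gamma+i\tau)|$ is uniform in $\gamma$ and integrable up to the line $\operatorname{Re}\lambda=a$ (for $a>0$ the zeros $\pm ip/L_p$ stay at horizontal distance $a$ from that line), letting $\gamma\downarrow a$ delivers $\|h(t)\|\le Me^{at}$, $t\ge0$. Finally, Fubini together with Fourier inversion gives $\int_0^\infty e^{-\lambda t}h(t)\,dt=g(\lambda)$ first for large $\operatorname{Re}\lambda$, hence for all $\operatorname{Re}\lambda>a$ by analytic continuation; multiplying back by $P\tilde{P}$ produces the asserted representation $f(\lambda)=P(\lambda)\tilde{P}(\lambda)\int_0^\infty e^{-\lambda t}h(t)\,dt$.

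The main obstacle is the integrability of $\tau\mapsto\tau^jg(\gamma+i\tau)$ along the vertical line, that is, the control of $1/|\tilde{P}(\gamma+i\tau)|$ for \emph{all} $\tau$. The estimate (\ref{nejhi}) supplies the exponential lower bound $|\tilde{P}(\zeta)|\ge Ce^{A|\zeta|/r(|\zeta|+1)}$ only in the horn $|\operatorname{Im}\zeta|\le|\operatorname{Re}\zeta|/2+1/L_1$, whereas the zeros $\pm ip/L_p$ accumulate along the imaginary axis and produce, on the line $\operatorname{Re}\lambda=\gamma$, a sequence of narrow spikes of $1/|\tilde{P}|$ near $\tau=\pm p/L_p$, all lying \emph{outside} that horn. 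This cannot be cured by bending the Bromwich contour into the horn, since remaining inside it for large $|\operatorname{Im}\lambda|$ forces $\operatorname{Re}\lambda\to\infty$ and makes $e^{\lambda t}$ blow up for $t>0$. The remedy is a direct estimation of the spikes: near $\tau=p/L_p$ one writes $\tilde{P}=(1+L_p^2\lambda^2/p^2)B_p(\lambda)$, bounds the single critical factor from below using $\operatorname{Re}\lambda=\gamma>0$, and shows that the remaining product $B_p$ grows fast enough in $p$ that the spikes' total contribution is summable and all weighted integrals remain finite. This is precisely the computation \emph{``using the properties of local operators as well as the norms $\|\cdot\|_{h,p!}$''} that the authors transfer from the Roumieu tempered ultradistribution case; once it is established, the convergence, smoothness, vanishing of derivatives and growth above all follow by the standard template, with Lemma \ref{izoh} and (\ref{nejhi}) as the only genuinely new ingredients.
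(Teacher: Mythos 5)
Your template---set $g=f/(P\tilde P)$, invert along a Bromwich line, differentiate under the integral---is surely the intended one (the paper gives no proof of Theorem \ref{fhlap1} at all, only the sentence referring to ``properties of local operators and the norms $\|\cdot\|_{h,p!}$ as in the Roumieu case'', so there is no written argument to match against), and you correctly isolate the crux: (\ref{nejhi}) controls $|\tilde P|$ only in the horn $|\mathrm{Im}\,\zeta|\le |\mathrm{Re}\,\zeta|/2+1/L_1$, while the Bromwich line passes arbitrarily close (in the relevant sense) to the zeros $\pm ip/L_p$ of $\tilde P$. The genuine gap is that your remedy consists of asserting exactly the hard estimate: that the complementary product $B_p(\lambda)=\prod_{q\neq p}(1+L_q^2\lambda^2/q^2)$ grows fast enough along $\mathrm{Re}\,\lambda=\gamma$ to beat the small factor $\sim 2\gamma L_p/p$ and all powers $|\tau|^j$. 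This is nowhere proved, it does not follow from (\ref{nejhi}) or from $L_p\downarrow 0$, and it is in fact false in this generality. Sketch of a counterexample: take $L_q=\epsilon_k$ for $Q_k\le q<Q_{k+1}$ with $\epsilon_k\downarrow 0$ slowly (say $\epsilon_k=\epsilon_0/(k+1)$) and $Q_{k+1}$ recursively enormous. For $p$ deep inside block $k$ (say $Q_k^2\le p\le \sqrt{Q_{k+1}/2}$), the zeros near $x_p=p/\epsilon_k$ are those of the constant-sequence operator $\sinh(\pi\epsilon_k\zeta)/(\pi\epsilon_k\zeta)$, whose modulus at $\gamma+ix_p$ equals $\sinh(\pi\epsilon_k\gamma)/(\pi\epsilon_k|\gamma+ix_p|)\approx \gamma/x_p$; the zeros outside the block alter this by a factor at most $Ce^{2Q_k/k}$, which is $\le x_p^{1/3}$ once $Q_{k+1}$ is chosen large enough. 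Hence $1/|\tilde P(\gamma+i\tau)|\ge c\,\tau^{2/3}/\gamma$ along a sequence of heights $\tau\to\infty$: $1/\tilde P$ is not even bounded on vertical lines, let alone rapidly decreasing. This breaks not only your proof but the statement itself for such a $\tilde P$ (which is of the form (\ref{str3}) and satisfies (\ref{nejhi})): take $P\equiv 1$, $f\equiv x_0$, $a=0$; the asserted representation forces $\hat h=x_0/\tilde P$, whereas the Laplace transform of any $h$ with $\|h(t)\|\le M$ obeys $\|\hat h(\gamma+i\tau)\|\le M/\gamma$ uniformly in $\tau$.

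So the missing step cannot be ``established'' at the stated level of generality; it must either be proved under an additional separation hypothesis on the zeros (it does hold, for instance, when $p/L_p\ge cp^{2}$, where $\prod_{q\neq p}|1-x_p^2/x_q^2|$ grows like $e^{\pi p}$, and then your whole argument goes through), or the theorem must be read as asserting the existence of \emph{some} admissible $\tilde P$ for which the representation holds---which is evidently what the appeal to the Roumieu-case machinery is meant to supply, and is all that the application in Theorem \ref{fhesg} needs. Two secondary points even in the repaired setting: your limit $\gamma\downarrow a$ for the bound $\|h(t)\|\le Me^{at}$ is not routine, since the dominating function $C/|\tilde P(\gamma+i\cdot)|$ is not uniform in $\gamma$ near the zeros (spike heights scale like $1/(\gamma-a)$); what the contour argument yields directly is $\|h(t)\|\le M_\varepsilon e^{(a+\varepsilon)t}$ for each $\varepsilon>0$. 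And the vanishing of $h$ on $(-\infty,0)$ by pushing the contour to the right requires the integrals $\int\|g(\gamma+i\tau)\|\,d\tau$ to stay under control as $\gamma\to\infty$, which again rests on the same unproved lower bound for $|\tilde P|$ off the horn.
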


\begin{thm}\label{fhesg}
Let $A$ be closed and densely defined. Then $A$ generates a dense
$(EFHSG)$ iff the following conditions are true:
\begin{itemize}
\item[(i)] $\{ \lambda \in {\mathbb C} : Re\lambda>a\} \subset \rho(A).$
\item[(ii)] There exist a local operator $P$  with the property
$|P(\lambda)|>0,\; Re \lambda>a$, a local operator
$\tilde{P}$ with the properties as in the previous theorem
 and $C>0$ such that
$$
||R(\lambda : A)||\leq C |P(\lambda)\tilde{P}(\lambda)|,\; Re \lambda >a.
$$
\item[(iii)] $R(\lambda : A)$ is the Laplace transform of some $G$ which satisfies
$(H.2).$
\end{itemize}
\end{thm}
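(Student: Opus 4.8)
The plan is to make the whole equivalence pivot on the single identification
$$\hat{G}(\lambda)=R(\lambda:A),\qquad \operatorname{Re}\lambda>a,$$
that is, on the statement that the Laplace transform of an (EFHSG) is the resolvent of its infinitesimal generator $A=G(-\delta')$. The bridge is the operational identity read off from Definition \ref{infgh}: for $\phi\in\mathcal{P}^{00}_{*,a}$ one has $G(\phi')x=-G(\phi)Ax$, hence $AG(\phi)=-G(\phi')$. Feeding in the exponential $E_\lambda=e^{-\lambda\cdot}$ (which satisfies $E_\lambda(0)=1$ and $E_\lambda'=-\lambda E_\lambda$) through the decomposition (\ref{hiphipravan}) and the extension of $G$ to $\mathcal{P}_{*,a}([-r,\infty])$ legitimated by Lemma \ref{dense}, the boundary value $E_\lambda(0)=1$ produces, via $G(\delta)=I$, an extra identity term, so that $G(E_\lambda')=-I-A\hat{G}(\lambda)$. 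Since $G(E_\lambda')=-\lambda\hat{G}(\lambda)$, this rearranges to $(\lambda-A)\hat{G}(\lambda)=I$; running the same computation on $D(A)$ and invoking (H.2) for injectivity gives $\hat{G}(\lambda)(\lambda-A)x=x$ for $x\in D(A)$.

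For the forward direction, assume $A$ generates a dense (EFHSG) $G$. The identity above then shows $\{\operatorname{Re}\lambda>a\}\subset\rho(A)$ with $R(\lambda:A)=\hat{G}(\lambda)$, which is precisely (i) and (iii). For the estimate (ii) I would appeal to the structure theory (Proposition \ref{novo}): $e^{-a\cdot}G$ admits a representation $P(d/dt)\tilde{P}(d/dt)h$ with $h$ continuous and subexponential, parallel to the conclusion of Theorem \ref{fhlap1}; Laplace transforming and using boundedness of $\int_0^\infty e^{-\lambda t}h(t)\,dt$ on the half-plane yields $\|R(\lambda:A)\|=\|\hat{G}(\lambda)\|\le C|P(\lambda)\tilde{P}(\lambda)|$, which is the required bound.

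Conversely, assume (i)--(iii). Condition (ii) places $f=R(\cdot:A)$ in the class treated by Theorem \ref{fhlap1}, which supplies $M>0$ and $h\in C^\infty([0,\infty);L(E))$ with $h^{(j)}(0)=0$, $\|h(t)\|\le Me^{at}$, and $R(\lambda:A)=P(\lambda)\tilde{P}(\lambda)\int_0^\infty e^{-\lambda t}h(t)\,dt$. Setting $G:=P(d/dt)\tilde{P}(d/dt)h$ produces $e^{-a\cdot}G\in\mathcal{Q}_+(\mathbb{D},L(E))$ with $\hat{G}(\lambda)=R(\lambda:A)$. The decisive axiom (H.1) is then verified by testing against exponentials: the elementary computation $E_\lambda*_0 E_\mu=(E_\mu-E_\lambda)/(\lambda-\mu)$ combined with the resolvent equation gives
$$G(E_\lambda*_0E_\mu)=\frac{R(\mu:A)-R(\lambda:A)}{\lambda-\mu}=R(\lambda:A)R(\mu:A)=G(E_\lambda)G(E_\mu),$$
and since the exponentials are total and the Laplace transform is injective (Lemma \ref{dense}), (H.1) extends by separate continuity to all $\phi,\psi$. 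Now (H.2) holds by hypothesis (iii), so $G$ is an (EFHSG); rerunning the fundamental identity identifies its generator as $A$, while (H.3) follows from the density of $D(A)=R(R(\lambda:A))$ together with $R(\lambda:A)=\hat{G}(\lambda)$ and the approximation $\lambda R(\lambda:A)\to I$ strongly on $E=\overline{D(A)}$.

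The hard part will be the rigorous treatment of $E_\lambda$, which lies neither in $\mathcal{P}^{00}_{*,a}$ (it violates $\phi(0)=\phi'(0)=0$) nor in $\mathcal{P}_{*,a}(\mathbb{D})$ (it grows at $-\infty$): one must work inside $\mathcal{P}_{*,a}([-r,\infty])$, justify the generator relation for $E_\lambda$ through the splitting (\ref{hiphipravan}) into $\mathcal{P}^{00}_{*,a}$-pieces plus the fixed $\phi_0,\phi_1$, and correctly capture the boundary contribution that yields the crucial $-I$ in $(\lambda-A)\hat{G}(\lambda)=I$. The two remaining technical points---matching the structural bound to the precise product form $P\tilde{P}$ in (ii), and propagating (H.1) from the total family of exponentials to arbitrary test functions---are comparatively routine given Theorem \ref{fhlap1} and Lemma \ref{dense}.
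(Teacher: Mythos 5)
Your necessity direction is essentially the paper's own argument in different clothing: the paper also pivots on the identity $(\lambda-A)\hat G(\lambda)=I$, obtained by pushing the Heaviside-truncated exponential $E^{+}_{\lambda}=E_{\lambda}H$ through (H.1) via $(\delta'+\lambda\delta)\ast E^{+}_{\lambda}=\delta$, and it obtains the estimate in (ii) from the a priori bound $\|\hat G(\lambda)\|\leq C|P(\lambda)|$ valid for the Laplace transform of any element of ${\mathcal Q}_{+}(\mathbb{D},L(E))$. One caveat: the operator identity $AG(\phi)=-G(\phi')$ on all of $E$ is \emph{not} ``read off'' from Definition \ref{infgh}, which only relates $x$ and $Ax$ for $x\in D(A)$; to get it you need the (H.1)-based computation $(G(\psi)x,\,G(-\psi')x-\psi(0)x)\in G(-\delta')$, i.e.\ exactly the boundary-term identity established at the start of the proof of Theorem \ref{hip6.015}. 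That, and the insertion of $E_{\lambda}$ through Lemma \ref{dense} and closedness of $A$, are repairable bookkeeping.

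The genuine gap is in your sufficiency direction, at the step you yourself call decisive. You verify (H.1) only on exponentials (the pseudo-resolvent identity) and then assert that it ``extends by separate continuity'' because ``the exponentials are total and the Laplace transform is injective (Lemma \ref{dense})''. Lemma \ref{dense} asserts neither fact: it says ${\mathcal P}_{*}(\mathbb{D})$ is dense in ${\mathcal P}_{*}([-r,\infty])$, which legitimizes the \emph{unique extension} of $G$ (hence the definition of $\hat G$), but gives no density of $\mathrm{span}\{E_{\lambda}\}$ in any relevant space and no injectivity of $G\mapsto\hat G$. Even granting one-variable injectivity of the Laplace transform on ${\mathcal Q}_{+}(\mathbb{D},L(E))$ (a real theorem, of Komatsu type, nowhere stated in the paper), passing from $\hat G(\lambda)\hat G(\mu)=G(E_{\lambda}\ast_{0}E_{\mu})$ to $G(\phi\ast_{0}\psi)=G(\phi)G(\psi)$ is a two-variable uniqueness statement---or two successive one-variable applications, each requiring that maps such as $\phi\mapsto G(\phi\ast_{0}E_{\mu})$ are again Fourier hyperfunctions supported by $[0,\infty]$---and this is precisely the hard content your proposal does not supply. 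The paper avoids it altogether: from $\hat G(\lambda)=R(\lambda:A)$ it deduces the one-variable convolution equations $(\delta'\otimes I_{D(A)}-\delta\otimes A)\ast G=\delta\otimes I_{E}$ and $G\ast(\delta'\otimes I_{D(A)}-\delta\otimes A)=\delta\otimes I_{D(A)}$, i.e.\ that $G$ is a fundamental solution, and then invokes the converse part of Theorem \ref{hip6.015} (fundamental solution plus ${\mathcal N}(G)=\{0\}$, which is your hypothesis (iii), implies an (FHSG) generated by $A$), whose proof is imported from \cite{msd}. Finally, your route to (H.3) via ``$\lambda R(\lambda:A)\to I$ strongly'' fails in this setting: condition (ii) only controls $\|R(\lambda:A)\|$ by $C|P(\lambda)\tilde P(\lambda)|$, which grows like $e^{\varepsilon|\lambda|}$, so no Abelian/Hille--Yosida-type approximation of the identity is available; any density argument must exploit the density of $D(A)$ and the structure of $G$ rather than resolvent asymptotics.
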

\begin{proof} We will prove the theorem for $a=0.$
 $(\Leftarrow)$:  Theorem \ref{fhlap1} implies
that $R(\lambda:A)$ is of the form
$$R(\lambda : A)= P(\lambda)\tilde{P}
(\lambda)\int_0^\infty e^{-\lambda t}S(t)\, dt,\ Re \lambda>0,
$$
where $S\in C^\infty([0,\infty))$, $S^{(j)}(0)=0,
\;j\in\mathbb{N}_0$ and for every $\varepsilon
>0$ there exists $M>0$ such that $||S(t)||\leq M ,\ t\geq 0$ This
implies $R(\lambda : A)={\mathcal L}(G)(\lambda),\; Re \lambda>0,$
where $G=P(-d/dt)\tilde{P}(-d/dt) S,\;$ and $G\in {\mathcal
Q}_+({\mathbb D},E).$ Since
$$
(\delta'\otimes I_{D(A)}-\delta\otimes A)*G=\delta \otimes I_E,
$$
$$G*(\delta'\otimes I_{D(A)}-\delta \otimes A)=\delta \otimes
I_{D(A)},
$$
and (iii) holds, we have that  $G$ is a Fourier hyperfunction
semigroup.

% As in ~\cite[Theorem 2.9]{msd}, now with the use of
%norms $||\cdot||_{h,p!}$, we conclude the part $(\Rightarrow)$.
$(\Rightarrow)$: Put $E_\lambda^+=E_\lambda H, R_\lambda^+=R_\lambda H,$ where $H$ is Heaviside's function. Let $G\in{\mathcal Q}_+({\mathbb D},L(E,D(A)))$
and $\lambda\in\{z\in{\mathbb C} : Re\lambda>a\}\subset\rho(A)$ be
fixed. Then $(\delta'+\lambda\delta)\ast E_{\lambda}^+=\delta.$
Now let $\phi\in{\mathcal P}_{\ast}({\mathbb D})$ and $x\in E.$ Then
$$G((\delta'+\lambda\delta)\ast_0 E_{\lambda}^+\ast_0\phi)=G(\phi)x,$$
and $$G(\delta'\ast_0R_{\lambda}^+\ast_0\phi)x+\lambda
G(\delta\ast_0E_{\lambda}^+\ast_0\phi)x=G(\delta')
G(E_{\lambda}^+\ast_0\phi)x+\lambda{\hat{G}}(\lambda)G(\phi)x\, .$$ Hence,
$$-A({\hat{G}}(\lambda)G(\phi)x)+\lambda\hat{G}(\lambda)
G(\phi)x=G(\phi)x\, .$$ Since $(H.3)$ is assumed
$(-A+\lambda){\hat{G}}(\lambda)=I$, so
$\|\hat{G}(\lambda)\|\leq C|P(\lambda)|$, $Re\lambda>a$,
where $P$ is an appropriate local operator.
\end{proof}

\begin{cor}\label{fhnondesg}
Suppose $A$ is a closed linear operator.
If $A$ generates an (EFHSG), (i), (ii) and (iii) of Theorem
\ref{fhesg} hold.

If (i) and (ii) of
Theorem \ref{fhesg} hold, then $G,$ defined in the same way as above, is a
Fourier hyperfunction fundamental solution for $A$. If
(iii) is satisfied, then $G$ is an (EFHSG) generated by $A$.
\end{cor}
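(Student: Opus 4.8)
The plan is to deduce Corollary \ref{fhnondesg} from Theorem \ref{fhesg} by stripping away the density hypotheses and examining where they were actually used in the proof. For the first assertion, suppose $A$ generates an (EFHSG) $G$. The forward implication $(\Rightarrow)$ in Theorem \ref{fhesg} produces conditions (i), (ii), (iii), and I would check that its argument never invoked $\overline{D(A)}=E$: the computation establishing $(-A+\lambda)\hat{G}(\lambda)=I$ and the resolvent bound $\|\hat{G}(\lambda)\|\le C|P(\lambda)|$ rests only on the semigroup property (H.1), the convolution identity $(\delta'+\lambda\delta)\ast E_\lambda^+=\delta$, and condition (H.3). Thus the same chain of equalities goes through verbatim, yielding (i), (ii), (iii) for an arbitrary closed $A$ generating an (EFHSG).

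For the second assertion I would run the converse direction $(\Leftarrow)$ of Theorem \ref{fhesg}. Given (i) and (ii), Theorem \ref{fhlap1} writes $R(\lambda:A)$ as $P(\lambda)\tilde P(\lambda)\int_0^\infty e^{-\lambda t}S(t)\,dt$ with $S\in C^\infty([0,\infty))$, $S^{(j)}(0)=0$, and $\|S(t)\|\le M$. Setting $G=P(-d/dt)\tilde P(-d/dt)S$ produces an element of $\mathcal{Q}_+(\mathbb{D},L(E,[D(A)]))$ whose Laplace transform is $R(\lambda:A)$. The key point is that the two convolution identities
$$(\delta'\otimes I_{D(A)}-\delta\otimes A)\ast G=\delta\otimes I_E,\quad G\ast(\delta'\otimes I_{D(A)}-\delta\otimes A)=\delta\otimes I_{D(A)}$$
are equivalent, after Laplace transform, to $(\lambda-A)R(\lambda:A)=I$ and $R(\lambda:A)(\lambda-A)=I$ on $D(A)$, which hold by (i) alone. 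Hence $G$ is a Fourier hyperfunction fundamental solution for $A$ without any density assumption.

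Finally, if (iii) also holds, then the resolvent is the Laplace transform of a $G$ satisfying (H.2), and combining this with the fundamental-solution property just established gives that $G$ is a pre-Fourier hyperfunction semigroup satisfying (H.1) and (H.2); thus $G$ is an (EFHSG), and the recovery of $A=G(-\delta')$ as its generator follows from the injectivity furnished by Proposition \ref{good} together with the resolvent equation $(\lambda-A)\hat{G}(\lambda)=I$.

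The main obstacle I anticipate is verifying that condition (H.3) was genuinely inessential to deriving $(-A+\lambda)\hat{G}(\lambda)=I$ in the forward direction: in Theorem \ref{fhesg} density enters precisely at the step where one passes from $G(\phi)x$ ranging over a dense set to the conclusion $(-A+\lambda)\hat G(\lambda)=I$ as an operator identity. Without density one obtains the identity only on the closure of $\mathcal{R}(G)$, so the delicate part is to argue that the resolvent bound (ii) and membership $\lambda\in\rho(A)$ in (i) still follow—this is exactly where (iii), namely that $R(\lambda:A)$ itself is the Laplace transform of a semigroup satisfying (H.2), must be used to pin down $\hat G(\lambda)=R(\lambda:A)$ on all of $E$ rather than merely on $\overline{\mathcal{R}(G)}$.
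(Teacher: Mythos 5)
Your handling of the second and third assertions is correct and coincides with the paper's own route: the $(\Leftarrow)$ part of Theorem \ref{fhesg} (via Theorem \ref{fhlap1}) never uses density of $D(A)$, so (i) and (ii) alone yield the fundamental solution $G=P(-d/dt)\tilde P(-d/dt)S$, and adjoining (iii) upgrades $G$ to an (EFHSG) generated by $A$. The problem is in your first assertion. You claim the forward computation of Theorem \ref{fhesg} goes through ``verbatim'' because it rests only on (H.1), the identity $(\delta'+\lambda\delta)\ast E_{\lambda}^{+}=\delta$, and (H.3) --- but (H.3) is precisely the hypothesis you do not have. By Definition \ref{hipi}, an (EFHSG) satisfies (H.1) and (H.2); only a \emph{dense} (EFHSG) satisfies (H.3), and the dense case is the theorem, not the corollary (the paper stresses that in the corollary $A$ is non-densely defined). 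Your closing paragraph correctly locates the difficulty --- without (H.3) the identity $(-A+\lambda)\hat G(\lambda)y=y$ is obtained only for $y$ in $\overline{\mathcal R(G)}$ --- but the repair you propose is circular: condition (iii) speaks about $R(\lambda:A)$, hence already presupposes $\lambda\in\rho(A)$, i.e.\ (i); you cannot invoke (iii) while (i) and (ii) are still unproven, since (i)--(iii) are the conclusion of this implication.

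The non-circular argument available inside the paper's framework bypasses the density step entirely by going through the fundamental-solution property rather than through $\mathcal R(G)$. From the definition of the generator $A=G(-\delta')$ one checks directly, exactly as in the $(\Rightarrow)$ part of Theorem \ref{hip6.015}, that $(G(\psi)x,\,G(-\psi')x-\psi(0)x)\in G(-\delta')$ for all admissible $\psi$ and $x$, whence $G$ satisfies the two convolution identities with $P=\delta'\otimes I_{D(A)}-\delta\otimes A$, i.e.\ $G$ is a Fourier hyperfunction fundamental solution for $A$. Applying the Laplace transform to $P\ast G=\delta\otimes I_{E}$ and $G\ast P=\delta\otimes I_{[D(A)]}$ gives $(\lambda-A)\hat G(\lambda)=I_{E}$ on all of $E$ and $\hat G(\lambda)(\lambda-A)x=x$ for $x\in D(A)$, for every $Re\,\lambda>a$; these are operator identities coming from the convolution equations, not from a limit over a dense range, so no density is needed. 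This yields (i) and $R(\lambda:A)=\hat G(\lambda)$; the bound (ii) follows from the proposition estimating $\hat G$ by a local operator (and since $|\tilde P|$ is bounded below by a positive constant by (\ref{nejhi}), such a bound can be written in the product form $C|P(\lambda)\tilde P(\lambda)|$); finally (iii) holds because this $G$ satisfies (H.2) by hypothesis.
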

We note that in Corollary \ref{fhnondesg} the operator $A$ is non--densely defined.\\
\indent Now we will prove a theorem related to Fourier hyperfunction
semigroups. As in the case of ultradistributions, the theorem can be
proved for (EFHSG) but for the sake of simplicity,  we will assume
that $a=0.$

We need one more theorem.
\begin{thm}\label{hip6.015}
Let $A$ be a closed operator in $E$. If $A$ generates a (FHSG) $G$,
then $G$ is an Fourier hyperfunction  fundamental solution for
$$P:=\delta' \otimes I_{D(A)}-\delta \otimes A \in {\mathcal
Q}_{+}(\mathbb{D},L([D(A)],E)).$$ In particular, if $T\in {\mathcal
Q}_+(\mathbb{D},E)$, then $u=G*T$ is the unique solution of
\begin{equation} \label{jedn}
-Au+\frac{\partial}{\partial t}u=T,\; u\in{\mathcal
Q}_+(\mathbb{D},[D(A)]).
\end{equation}
If \text{supp}$T\subset[\alpha,\infty),$ then
\text{supp}$u\subset[\alpha,\infty).$

Conversely, if $G \in {\mathcal Q}_{+}(\mathbb{D},L(E,[D(A)]))$ is
a Fourier hyperfunction fundamental solution for $P$ and ${\mathcal
N}(G)=\{0\},$ then $G$ is an (FHSG) in $E$.
\end{thm}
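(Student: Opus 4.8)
The plan is to reduce the entire statement to two \emph{master identities} linking $G$ to its generator $A=G(-\delta')$, and then to read the convolution equations off these identities. The basic computational tool is integration by parts in the truncated product: for $\chi,\psi\in\mathcal P_*(\mathbb D)$ one has $\chi'*_0\psi=\psi(0)\chi-\chi(0)\psi+\chi*_0\psi'$, which, combined with the semigroup property (H.1), converts a derivative inside $G$ into a boundary contribution. Using the definition of the generator (Definition \ref{infgh}) together with (H.1), I first obtain, for every $\psi\in\mathcal P_*(\mathbb D)$ and $x\in E$, that $G(\psi)x\in D(A)$ and
$$AG(\psi)x=-\psi(0)x-G(\psi')x;$$
indeed, for $\chi\in\mathcal P_*^{00}(\mathbb D)$ one has $-G(\chi')G(\psi)x=-G(\chi'*_0\psi)x=G(\chi)\bigl(-\psi(0)x-G(\psi')x\bigr)$ since $\chi(0)=0$. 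Dually, for $x\in D(A)$ I note that $\chi*_0\psi\in\mathcal P_*^{00}(\mathbb D)$ whenever $\chi\in\mathcal P_*^{00}(\mathbb D)$, apply the defining relation $G(\eta)Ax=-G(\eta')x$ (for $\eta\in\mathcal P_*^{00}$) to $\eta=\chi*_0\psi$, and then cancel the factor $G(\chi)$ using (H.2) to get
$$G(\psi)Ax=-\psi(0)x-G(\psi')x.$$

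With these two identities in hand the fundamental-solution property is immediate. Writing the convolutions against a test function $\psi$, and using that $A$ is closed with $G$ taking values in $[D(A)]$, one has $(P*G)(\psi)x=-G(\psi')x-AG(\psi)x$ for $x\in E$ and $(G*P)(\psi)x=-G(\psi')x-G(\psi)Ax$ for $x\in D(A)$, the factor $\delta'$ producing $-G(\psi')$ and the factor $\delta\otimes A$ producing $A$ applied to $G(\psi)\cdot$. By the two master identities both right-hand sides equal $\psi(0)x=\langle\delta,\psi\rangle x$, so $P*G=\delta\otimes I_E$ and $G*P=\delta\otimes I_{[D(A)]}$; that is, $G$ is a Fourier hyperfunction fundamental solution for $P$.

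For the solvability statement I set $u=G*T$ with $T\in\mathcal Q_+(\mathbb D,E)$, so that $u\in\mathcal Q_+(\mathbb D,[D(A)])$. Associativity of convolution together with $P*G=\delta\otimes I_E$ gives $P*u=(P*G)*T=T$, and since $P*u=-Au+\partial_t u$ this is exactly (\ref{jedn}). Uniqueness follows by convolving any solution $v$ of $P*v=T$ on the left by $G$: using $G*P=\delta\otimes I_{[D(A)]}$ and associativity, $v=(G*P)*v=G*(P*v)=G*T=u$. The support assertion is the standard fact that for hyperfunctions carried by half-lines $\mathrm{supp}\,(G*T)\subset\mathrm{supp}\,G+\mathrm{supp}\,T\subset[0,\infty)+[\alpha,\infty)=[\alpha,\infty)$.

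Finally, for the converse I am given that $G$ is a Fourier hyperfunction fundamental solution for $P$ (equivalently, a Fourier hyperfunction solution for $A$) and that $\mathcal N(G)=\{0\}$, which is precisely (H.2). The remaining axiom (H.1) is supplied by the Remark asserting that every Fourier hyperfunction solution for $A$ is a pre-Fourier hyperfunction semigroup; hence $G$ satisfies (H.1) and (H.2) and is an (FHSG). The step I expect to be the most delicate is the derivation of the dual master identity $G(\psi)Ax=-\psi(0)x-G(\psi')x$: passing $A$ through $G(\psi)$ and extending the defining relation from $\mathcal P_*^{00}$ to all of $\mathcal P_*$ is exactly where the injectivity hypothesis (H.2) must be invoked, and the careful bookkeeping of the boundary term $\psi(0)x$ — which is what produces the Dirac mass on the right-hand side — is the crux of the argument.
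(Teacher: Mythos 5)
Your proposal is correct and takes essentially the same route as the paper: your first master identity $AG(\psi)x=-\psi(0)x-G(\psi')x$ is precisely the paper's stated check that $(G(\psi)x,\,G(-\psi')x-\psi(0)x)\in G(-\delta')$, and the remaining steps (the dual identity giving $G*P=\delta\otimes I_{[D(A)]}$, solvability and uniqueness via associativity, the support property, and the converse reduced to the pre-Fourier-hyperfunction-semigroup fact plus (H.2)) are exactly the details the paper declares ``simple to check'' or delegates to the ultradistribution analogue. In particular, your explicit invocation of (H.2) to pass the identity $G(\psi)Ax=-\psi(0)x-G(\psi')x$ from $\mathcal{P}_*^{00}$ to all of $\mathcal{P}_*$ is the bookkeeping the paper leaves implicit, so there is no gap.
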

\begin{proof}
$(\Rightarrow)$ One can simply check that $(G(\psi )x,G(-\psi
^{\prime })x-\psi (0)x)\in G(-\delta ^{\prime })$ and $G$ is a
fundamental solution for $P$. The uniqueness of the solution $u=G*T$
of (\ref{jedn}) is clear as well as the support property for the
solution $u$ if supp$T\subset[\alpha,\infty)$.

The part $(\Leftarrow )$ can be proved in the same way as in the
 ~\cite[Theorem 3.3]{msd}, part (d)$\Rightarrow$ (a).
\end{proof}

% Some parts of theorems which are to follow are
%obtained by S.
%\={O}uchi \cite{o192}.% and H. Komatsu \cite{k92}.
  First, we list the statements:

  \vspace{0,5cm}

\begin{itemize}
%\item[(a)] $A$ generates an (HSG) denoted by $G$.
\item[(1)]
$A$ generates an (FHSG) $G.$
%\item[(b)] $A$ generates an (HSG) such that for every $a>0$ it is  of the
%form $G=P^{a,h}_{L_p}(-id/dt)S_{a,K}$,  where $S_{a,K}:(-\infty,
%a)\rightarrow E$ is continuous and $S_{a,K}(t)=0,\; t\leq 0.$
\item[(2)] $A$ generates an (FHSG) of the
form $G=P_{L_p}(-id/dt)S_{a,K}$,  where $S_{K}:{\mathbb
R}\rightarrow L(E)$ is exponentially slowly increasing continuous
function and $S_{K}(t)=0,\; t\leq 0.$
%\item[(c)] For every $a>0$, $A$ is the generator  of a local $K_a$-convoluted  semigroup
%$(S_{a,K}(t))_{t\in [0,\tau_{a})},$ where $K_a={\mathcal
%L}^{-1}(\frac{1}{P^{a,h}_{L_p}(-i\lambda)}),$ $P^{a,h}_{L_p}$ is a
%local operator and for $b>a$  the restriction of $P^{b,h}S_{b,K}$ on
%$(-\infty, a)$ is equal to $P^{a,h}S_{a,K}.$
\item[(3)]  $A$ is the generator  of a global $K$-convoluted  semigroup
$(S_{K}(t))_{t\geq  0}$ , where $K={\mathcal
L}^{-1}(\frac{1}{P_{L_p}(-i\lambda)}).$
\item[(4)] The problem
$$(\delta\otimes(-A)+\delta'\otimes I_E)*G=\delta \otimes I_E,
\;\; G*(\delta\otimes(-A)+\delta'\otimes I_{D(A)})=\delta \otimes
I_{D(A)}
$$
has a unique solution $G \in {\mathcal Q}_+(\mathbb{D},L(E,[D(A)]))$
with ${\mathcal N}(G)=\{0\}$.
\item[(5)] For every $\varepsilon>0$ there exists $K_\varepsilon>0$
% suitable $C_{\varepsilon},\ K_{\varepsilon}>0$
such that
$$\rho(A)\supset \{\lambda\in {\mathbb{C}} :\mbox{   } Re \lambda
>0\}
%\varepsilon|\lambda|+C_\varepsilon\}
$$ and
$$ ||R(\lambda : A)||\leq K_\varepsilon e^{\varepsilon|\lambda|},\; Re
\lambda> 0.
%\in \Omega_{\varepsilon}.
$$
\end{itemize}

%Clearly, (2) $\Rightarrow$ (a);%
% (a)' $\Rightarrow$ (a); (b)' $\Rightarrow$ (b);
%(3) $\Rightarrow$ (c).

\begin{thm}\label{hypse}
 (1) $\Leftrightarrow$ (4);
(1) $\Rightarrow$ (3); (3) $\Rightarrow$ (4); (4) $\Rightarrow$ (5);
\end{thm}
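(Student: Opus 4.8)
The plan is to route everything through the Laplace transform together with the fundamental-solution characterisation already established in Theorem~\ref{hip6.015}, so that only the bridge to convoluted semigroups requires genuinely new work. I would first dispatch (1)$\Leftrightarrow$(4) directly from Theorem~\ref{hip6.015}: its forward part shows that an (FHSG) $G$ generated by $A$ is a Fourier hyperfunction fundamental solution for $P=\delta'\otimes I_{D(A)}-\delta\otimes A$, i.e.\ it satisfies the two convolution identities in (4), while ${\mathcal N}(G)=\{0\}$ is exactly (H.2); the $(\Leftarrow)$ part converts a fundamental solution with ${\mathcal N}(G)=\{0\}$ back into an (FHSG). The only statement not literally contained in Theorem~\ref{hip6.015} is the \emph{uniqueness} asserted in (4), which follows from associativity of the convolution: if $G_1,G_2$ both solve the system, then
$$G_1=G_1*(P*G_2)=(G_1*P)*G_2=(\delta\otimes I_{D(A)})*G_2=G_2.$$

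For (1)$\Rightarrow$(3) I would first pass to the structural form of statement (2). By the global structural theorem and Proposition~\ref{novo}, $G\in{\mathcal Q}_+(\mathbb D,L(E))$ may be written $G=P_{L_p}(-id/dt)S_K$ with $S_K:\mathbb R\to L(E)$ continuous, exponentially slowly increasing and vanishing for $t\le0$. Since (1)$\Leftrightarrow$(4) already makes $G$ a fundamental solution, Laplace-transforming the convolution identities gives $\hat G(\lambda)=R(\lambda:A)$ on $\{Re\,\lambda>0\}$; using that $P_{L_p}(-id/dt)$ acts as multiplication by the symbol $P_{L_p}(-i\lambda)$, I obtain
$$\widehat{S_K}(\lambda)=\frac{\hat G(\lambda)}{P_{L_p}(-i\lambda)}=\frac{R(\lambda:A)}{P_{L_p}(-i\lambda)}=\widetilde K(\lambda)R(\lambda:A),$$
where $\widetilde K(\lambda)=1/P_{L_p}(-i\lambda)$ since $K={\mathcal L}^{-1}(1/P_{L_p}(-i\lambda))$. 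This is precisely the Laplace-transform characterisation of a global $K$-convoluted semigroup generated by $A$, and injectivity of the Laplace transform identifies $(S_K(t))_{t\ge0}$ as such.

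The implication (3)$\Rightarrow$(4) runs the same identity in reverse: given $(S_K(t))$, set $G:=P_{L_p}(-id/dt)S_K$ and translate the defining functional equation of the convoluted semigroup into the two convolution identities of (4), with non-degeneracy of $(S_K(t))$ yielding ${\mathcal N}(G)=\{0\}$ and uniqueness again given by the associativity argument above. For (4)$\Rightarrow$(5) I apply the Laplace transform to the convolution identities to get $(\lambda-A)\hat G(\lambda)=I$ on $E$ and $\hat G(\lambda)(\lambda-A)=I$ on $D(A)$, so that $\{Re\,\lambda>0\}\subset\rho(A)$ with $R(\lambda:A)=\hat G(\lambda)$, and the Laplace-transform growth bound gives $\|R(\lambda:A)\|\le C|P(\lambda)|$ for a local operator $P$. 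The final move converts this into (5) via the \emph{infraexponential} growth of local symbols: from $(|b_k|k!)^{1/k}\to0$ one checks that for each $\varepsilon>0$ there is $C_\varepsilon$ with $|P(\lambda)|\le C_\varepsilon e^{\varepsilon|\lambda|}$, by splitting $\sum_k b_k\lambda^k$ into a low-order polynomial part (dominated by $e^{\varepsilon|\lambda|}$) and a high-order tail bounded by $e^{\delta|\lambda|}$.

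The step I expect to be the main obstacle is the precise matching in (1)$\Rightarrow$(3) and (3)$\Rightarrow$(4): one must verify that the formal identity $\widehat{S_K}=\widetilde K\,R(\cdot:A)$ genuinely encodes the defining relations of a non-degenerate convoluted semigroup, i.e.\ that the action of $P_{L_p}(-id/dt)$ on the continuous kernel $S_K$ is compatible with both convolution identities and with the vanishing conditions $S_K^{(j)}(0)=0$ supplied by Theorem~\ref{fhlap1}. Everything else reduces to bookkeeping with the Laplace transform and the bijectivity furnished by Lemma~\ref{izoh}.
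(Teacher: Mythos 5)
Your proposal is correct in outline and, at the level of architecture (everything routed through the fundamental-solution identities and the Laplace transform), matches the paper; but the two proofs diverge at the bridge (1)$\Rightarrow$(3). The paper's proof is essentially a web of citations: (1)$\Leftrightarrow$(4) and (1)$\Rightarrow$(3) are referred to the ultradistribution template of \cite{msd}, Theorem 3.3, where the key tool for (1)$\Rightarrow$(3) is Lemma \ref{izoh} — the bijectivity of $P_{L_p}(id/dt)$ on ${\mathcal P}_*({\mathbb D})$ is what transfers the semigroup property (H.1) of $G$, at the test-function level, into the composition property of the kernel $S_K$. You instead argue at the transform level: $\hat G=R(\cdot:A)$, $\widehat{S_K}=\widetilde K\,R(\cdot:A)$, then the generation theorem for exponentially bounded convoluted semigroups plus injectivity of the Laplace transform. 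This is legitimate (the generation theorem is available in \cite{kosticknjiga} and \cite{me152}) and it buys two things the paper's written proof does not supply: an actual argument for (3)$\Rightarrow$(4), and the uniqueness claim inside (4) via the associativity computation $G_1=G_1*(P*G_2)=(G_1*P)*G_2=G_2$. Your treatment of (1)$\Leftrightarrow$(4) (via Theorem \ref{hip6.015}) and of (4)$\Rightarrow$(5) (Laplace transform of the identities plus the infraexponential bound $|P(\lambda)|\le C_\varepsilon e^{\varepsilon|\lambda|}$ for local symbols) coincides with the paper's, the latter being exactly the content of Theorem \ref{fhesg} and Corollary \ref{fhnondesg}.

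One misstep needs fixing. The representation $G=P_{L_p}(-id/dt)S_K$ with $S_K$ continuous \emph{and vanishing for} $t\le 0$ does not follow from the global structural theorem together with Proposition \ref{novo}: those give a kernel that is slowly increasing on all of ${\mathbb R}$ with no support information, even when $\mathrm{supp}\,G\subset[0,\infty]$. The support property and the vanishing conditions $S_K^{(j)}(0)=0$ come only from Theorem \ref{fhlap1}, applied to $f(\lambda)=\hat G(\lambda)=R(\lambda:A)$ \emph{after} one has the resolvent bound $\|R(\lambda:A)\|\le C|P(\lambda)|$; that is, you must first run the Laplace estimates of your (4)$\Rightarrow$(5) step and then invoke Theorem \ref{fhlap1} to manufacture the kernel — this is precisely how the sufficiency direction of Theorem \ref{fhesg} constructs $S$. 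You acknowledge this dependence at the end of your proposal, but in the write-up the kernel representation must be derived in that order rather than asserted from the structure theorem. With that reordering, your argument goes through.
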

\begin{proof}
The equivalence of (1) and (4) can be proved in the same way as in
the case of ultradistribution semigroups, ~\cite[Theorem 3.3]{msd}.

One must use Lemma \ref{izoh} in proving of (1) $\Rightarrow$ (3)
(see ~\cite[Theorem 3.3 ]{msd}(a)' $\Rightarrow$ (c)').
The implication (4) $\Rightarrow$ (5) is a
consequence of Theorem \ref{fhesg} and Corollary \ref{fhnondesg}. In
the case when the infinitesimal generator is densely defined Y. Ito
\cite{ito1} proved the equivalence of a slightly different assertion
$(4)$,
 without the assumption $\mathcal{N}(G)=\{0\},$ and $(5).$
Our assertion is the stronger one since it is based on the strong
structural result of Theorem \ref{fhesg}.
\end{proof}
Operators which satisfy $(5)$ may be given using the analysis of P.C. Kunstmann ~\cite[Example 1.6]{ku101} with suitable chosen sequence $(M_p)_{p\in{{\mathbb N}_{0}}}.$\\

%\section{S. \={O}uchi's hyperfunction semigroups and convoluted semigroups}

\indent The definition of a hyperfunction fundamental solution $G$ for a
closed linear operator $A$ can be found in the paper \cite{o192} of
S. \={O}uchi. For the sake of simplicity, we shall also say, in that
case, that $A$ generates a hyperfunction semigroup $G.$ The next
assertion is proved in \cite{o192}:

A closed linear operator $A$ generates a hyperfunction semigroup iff
for every $\varepsilon>0$ there exist suitable $C_{\varepsilon},\
K_{\varepsilon}>0$ so that
$$\rho(A)\supset \Omega_{\varepsilon}:=\{\lambda\in {\mathbb{C}} :\mbox{   } Re \lambda \geq
\varepsilon|\lambda|+C_\varepsilon\}$$ and
$$ ||R(\lambda:A)||\leq K_\varepsilon e^{\varepsilon|\lambda|},\; \lambda\in \Omega_{\varepsilon}.$$
We will give some results related to hyperfunction and convoluted
semigroups in terms of spectral conditions and the asymptotic
behavior of $\tilde {K}$. We refer to \cite{a22} for the similar
results related to $n$-times integrated semigroups, $n\in {{\mathbb
N}_{0}},$ to \cite{l115} for $\alpha$-times integrated semigroups,
$\alpha>0$ and to ~\cite[Theorem 1.3.1]{me152} for convoluted
semigroups. Since we focus our attention on connections of
convoluted semigroups with hyperfunction semigroups, we use the next
conditions for $K:$
\begin{itemize}
\item[(P1)] $K$ is exponentially bounded, i.e., there exist $\beta \in {\mathbb R}$ and
$M>0$ so that $|K(t)|\leq Me^{\beta t},$ for a.e. $t\geq 0.$
\item[(P2)] $\tilde{K}(\lambda ) \neq 0,\; Re\lambda >\beta. $
\end{itemize}
In general, the second condition does not hold for  exponentially
bounded functions, cf. ~\cite[Theorem 1.11.1]{a43} and \cite{mn}.
Following  analysis in \cite{c62} and ~\cite[Theorem 2.7.1, Theorem 2.7.2]{kosticknjiga},
in our context, we can give the following statements:
\begin{thm}\label{conhy}
\begin{itemize}
\item[1.]
Let $K$ satisfy $(P1)$ and $(P2)$ and let $(S_{K}(t))_{t\in
[0,\tau)},$ $0<\tau \leq \infty$, be a $K$-convoluted semigroup
generated by $A.$
% so that if $\tau=\infty$, then this semigroup is exponentially bounded.
Suppose that for every $\varepsilon>0$ there exist
$\varepsilon_{0}\in (0,\tau \varepsilon)$ and $T_{\varepsilon}>0$
such that
$$
\frac{1}{|\tilde{K}(\lambda)|}\leq
T_{\varepsilon}e^{\varepsilon_{0}|\lambda|},\mbox{   }\lambda \in
\Omega_{\varepsilon} \cap \{\lambda \in {\mathbb
C}:Re\lambda>\beta\}.
$$
Then for every $\varepsilon>0$ there exist
$\overline{C}_{\varepsilon}>0$ and $\overline{K}_{\varepsilon}>0$
such that
$$
\Omega_{\varepsilon}^{1}=\{\lambda \in {\mathbb C} : Re\lambda
\geq \varepsilon |\lambda|+\overline{C}_{\varepsilon}\}\subset
\rho(A)\mbox{  and } ||R(\lambda:A)||\leq
\overline{K}_{\varepsilon}e^{\varepsilon_{0}|\lambda|},\mbox{  }
\lambda \in \Omega_{\varepsilon}^{1}.$$
\item[2.]
Let $K\in L^{1}_{loc}([0,\tau))$ for some $0<\tau \leq 1$ and let
$A$ generate a $K$-convoluted semigroup $(S_{K}(t))_{t\in
[0,\tau)}$. If $K$ can be extended to a function $K_{1}$ in $
L^{1}_{loc}([0,\infty))$ which satisfies (P1) so that its Laplace
transform has the same estimates as in Theorem $\ref{conhy}$, then
$A$ generates S. \={O}uchi's hyperfunction semigroup.
\item[3.]
Assume that for every $\varepsilon>0$ there exist
$C_{\varepsilon}>0$ and $M_{\varepsilon}>0$ so that
$\Omega_{\varepsilon} \subset \rho(A)$ and that
$||R(\lambda:A)||\leq M_{\varepsilon}e^{\varepsilon
|\lambda|},\mbox{    }\lambda \in \Omega_{\varepsilon}.$
\begin{itemize}
\item[(a)] Assume that $K$ is an exponentially bounded function with
the following property for its Laplace transform: There exists
$\varepsilon_0>0$ such that for every $\varepsilon>0$ exists
$T_{\varepsilon}>0$ with
\begin{equation}\label{str4}
|\tilde{K}(\lambda)|\leq
T_{\varepsilon}e^{-\varepsilon_{0}|\lambda|},\mbox{    }\lambda
\in \Omega_{\varepsilon}.
\end{equation}
If
%$K\in L^{1}_{loc}([0,\tau)),\; K\neq 0$, where
$\tau>0$ and $K_{|[0,\tau)}\neq 0$ ($K_{|[0,\tau)}$ is the
restriction of $K$ on $[0,\tau)$, then $A$ generates a local
$K$-semigroup on $[0,\tau)$.
\item[(b)] Assume that $K$ is an exponentially bounded function,
$\tau>0$ and $K_{|[0,\tau)}\neq 0.$
%$K\in L^{1}_{loc}([0,\tau)),\; K\neq 0$, where $\tau>0$.
Assume that for every $\varepsilon>0$ there exist
$T_{\varepsilon}>0$ and $\varepsilon_{0}\in (\varepsilon (1+\tau),\
\infty)$ such that $(\ref{str4})$ holds. Then $A$ generates a local
$K$-semigroup on $[0,\tau).$
\end{itemize}
\end{itemize}
\end{thm}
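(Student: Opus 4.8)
The plan is to treat the three parts through one circle of ideas: the fundamental identity linking the resolvent of the generator of a (local) $K$-convoluted semigroup with the (truncated) Laplace transform of $(S_{K}(t))$, combined with Cauchy's theorem to pass between contour integrals and resolvent bounds. Throughout I write $\Theta(t)=\int_0^t K(s)\,ds$ and use the defining relation $S_K(t)x=\Theta(t)x+A\int_0^t S_K(s)x\,ds$ of a $K$-convoluted semigroup generated by $A$, whose Laplace transform (where convergent) reads $\widetilde{S_K}(\lambda)=\tilde K(\lambda)R(\lambda:A)$.

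For Part 1, when $\tau=\infty$ the argument is immediate: $(P1)$ makes $\int_0^\infty e^{-\lambda t}S_K(t)\,dt$ bounded for $Re\lambda>\beta$, $(P2)$ gives $\tilde K(\lambda)\neq 0$, so $R(\lambda:A)=\widetilde{S_K}(\lambda)/\tilde K(\lambda)$ exists and is bounded by a constant times $|\tilde K(\lambda)|^{-1}\leq C T_\varepsilon e^{\varepsilon_0|\lambda|}$ on $\Omega_\varepsilon\cap\{Re\lambda>\beta\}$; choosing $\overline C_\varepsilon$ large (so that $Re\lambda\geq \varepsilon|\lambda|+\overline C_\varepsilon$ forces $Re\lambda>\beta$) transfers this to $\Omega^1_\varepsilon$. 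When $\tau<\infty$ the transform must be truncated at some $\tau'<\tau$; the boundary terms then carry a factor $e^{-\lambda\tau'}$, which on $\Omega_\varepsilon$ is dominated by $e^{-\tau'\varepsilon|\lambda|}$. The essential parameter match is the hypothesis $\varepsilon_0\in(0,\tau\varepsilon)$: one may pick $\tau'\in(\varepsilon_0/\varepsilon,\tau)$, whence $\varepsilon_0-\tau'\varepsilon<0$ and the boundary contribution $T_\varepsilon e^{(\varepsilon_0-\tau'\varepsilon)|\lambda|}$ stays bounded, so it is absorbed into the main term and the bound $\|R(\lambda:A)\|\leq \overline K_\varepsilon e^{\varepsilon_0|\lambda|}$ on $\Omega^1_\varepsilon$ survives; the same construction exhibits $\lambda-A$ as boundedly invertible there.

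Part 2 reduces to Part 1. Applying Part 1 to the global extension $K_1$ (which satisfies $(P1)$ and the same transform estimate) yields, for each $\varepsilon>0$, $\Omega^1_\varepsilon\subset\rho(A)$ and $\|R(\lambda:A)\|\leq \overline K_\varepsilon e^{\varepsilon_0|\lambda|}$ with $\varepsilon_0<\tau\varepsilon$. Since $\tau\leq 1$ we get $\varepsilon_0<\varepsilon$, hence $e^{\varepsilon_0|\lambda|}\leq e^{\varepsilon|\lambda|}$, and $\Omega^1_\varepsilon$ has exactly the shape of \={O}uchi's region $\Omega_\varepsilon$. Thus the spectral characterization of \={O}uchi recalled above is met, so $A$ generates his hyperfunction semigroup. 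For Part 3 I would run the argument in reverse and define the family directly by
$$
S_K(t)=\frac{1}{2\pi i}\int_{\partial\Omega_\varepsilon}e^{\lambda t}\tilde K(\lambda)R(\lambda:A)\,d\lambda .
$$
On $\partial\Omega_\varepsilon$ one has $Re\lambda=\varepsilon|\lambda|+C_\varepsilon$, so the integrand is estimated by
$$
|\tilde K(\lambda)|\,\|R(\lambda:A)\|\,e^{Re\lambda\,t}\leq T_\varepsilon M_\varepsilon\,e^{(\varepsilon+\varepsilon t-\varepsilon_0)|\lambda|}e^{C_\varepsilon t},
$$
which converges absolutely precisely when $\varepsilon(1+t)<\varepsilon_0$. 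In case $(a)$, $\varepsilon_0$ is fixed, so for each $t\in[0,\tau)$ one chooses $\varepsilon<\varepsilon_0/(1+t)$; in case $(b)$ the hypothesis $\varepsilon_0\in(\varepsilon(1+\tau),\infty)$ gives $\varepsilon(1+t)<\varepsilon(1+\tau)<\varepsilon_0$ uniformly for $t\in[0,\tau)$. Cauchy's theorem, justified by the same growth bounds, makes the definition independent of the admissible $\varepsilon$, and a routine computation verifies strong continuity and the integral equation $S_K(t)x=\Theta(t)x+A\int_0^t S_K(s)x\,ds$, so $(S_K(t))_{t\in[0,\tau)}$ is a local $K$-convoluted semigroup with generator $A$; the condition $K_{|[0,\tau)}\neq 0$ guarantees this family is nontrivial and genuinely generated by $A$.

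The main obstacle is the same analytic bookkeeping in the two harder directions. In Part 1 (local case) one must establish not merely an a priori estimate but the actual bijectivity of $\lambda-A$ on $\Omega^1_\varepsilon$, controlling the truncation and boundary terms so that the candidate operator is a genuine two-sided inverse of $\lambda-A$; in Part 3 one must check that the contour integral, whose convergence is only conditional on the $\varepsilon$-dependent region, really satisfies all defining axioms of a local $K$-convoluted semigroup and is contour-independent. These are exactly the points handled by the techniques of \cite{c62} and \cite[Theorem 2.7.1, Theorem 2.7.2]{kosticknjiga}, on which I would model the detailed estimates.
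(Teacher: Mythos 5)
Your proposal should first be measured against what the paper actually does: the paper contains \emph{no} proof of Theorem \ref{conhy}. The theorem is introduced with the sentence ``Following analysis in \cite{c62} and \cite[Theorem 2.7.1, Theorem 2.7.2]{kosticknjiga}, in our context, we can give the following statements,'' and no argument follows. So the only meaningful benchmark is the machinery of those references, and that is exactly what you reconstruct: for Part 1 the truncated-transform identity $(\lambda-A)\int_{0}^{\tau'}e^{-\lambda t}S_{K}(t)x\,dt=\int_{0}^{\tau'}e^{-\lambda t}K(t)\,dt\,x-e^{-\lambda\tau'}S_{K}(\tau')x$ followed by a perturbation (Neumann series) inversion; for Part 2 the observation that $\varepsilon_{0}<\tau\varepsilon\leq\varepsilon$ (since $\tau\leq 1$) puts the conclusion of Part 1 exactly into \={O}uchi's spectral characterization quoted earlier in the paper; for Part 3 the Cioranescu--Lumer contour construction with the convergence condition $\varepsilon(1+t)<\varepsilon_{0}$, with $\varepsilon$ chosen per $t$ in case (a) and uniformly in case (b). Your parameter bookkeeping (the choice $\tau'\in(\varepsilon_{0}/\varepsilon,\tau)$, the role of $\varepsilon_{0}\in(\varepsilon(1+\tau),\infty)$) is the correct one, and Part 2 is a genuine, complete reduction.

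Two repairs are needed, neither fatal. First, your separate treatment of $\tau=\infty$ in Part 1 is wrong as written: (P1) is exponential boundedness of $K$, not of $S_{K}$, and a global $K$-convoluted semigroup is not assumed (and need not be) exponentially bounded, so $\int_{0}^{\infty}e^{-\lambda t}S_{K}(t)\,dt$ may fail to exist. Delete that branch: your truncation argument applies verbatim when $\tau=\infty$, since then the hypothesis allows any $\varepsilon_{0}>0$ and any finite $\tau'>\varepsilon_{0}/\varepsilon$ works. Second, in the local case the phrase ``the boundary contribution stays bounded, so it is absorbed'' is too weak. Writing $(\lambda-A)\frac{1}{\tilde{K}(\lambda)}\int_{0}^{\tau'}e^{-\lambda t}S_{K}(t)\,dt=I-N(\lambda)$, where $N(\lambda)$ collects $e^{-\lambda\tau'}S_{K}(\tau')$ and $\int_{\tau'}^{\infty}e^{-\lambda t}K(t)\,dt$ divided by $\tilde{K}(\lambda)$, you must have $\|N(\lambda)\|<1$, not merely bounded. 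This is precisely where $\overline{C}_{\varepsilon}$ enters: on $\Omega^{1}_{\varepsilon}$ one has $Re \lambda\geq\varepsilon|\lambda|+\overline{C}_{\varepsilon}$, hence $\|N(\lambda)\|\leq C T_{\varepsilon}e^{(\varepsilon_{0}-\tau'\varepsilon)|\lambda|}e^{-\tau'\overline{C}_{\varepsilon}}$, and since $\varepsilon_{0}-\tau'\varepsilon<0$ this is made smaller than $1/2$ by enlarging $\overline{C}_{\varepsilon}$; the same estimate then yields both the two-sided invertibility of $\lambda-A$ (using commutation of $S_{K}$ with $A$ for the left inverse) and the bound $\|R(\lambda:A)\|\leq\overline{K}_{\varepsilon}e^{\varepsilon_{0}|\lambda|}$. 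With these adjustments your outline is a correct proof along the very lines the paper cites in lieu of one.
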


Connections of hyperfunction and ultradistribution semigroups with
(local integrated) regularized semigroups seems to be more
complicated.
In this context, there is a example (essentially due to R. Beals \cite{b41})
which shows that there exists a densely
defined operator $A$ on the Hardy space $H^{2}({\mathbb C}_{+})$
which has the following properties:

1. $A$ is the generator of S. \={O}uchi's hyperfunction semigroup.

2. $A$ is not a subgenerator of a local $\alpha$-times integrated
$C$-semigroup, for any injective $C\in L(H^{2}({\mathbb C}_{+}))$
and $\alpha>0.$

It is clear that there exists an operator $A$ which generates an
entire $C$-regularized group but not a hyperfunction semigroup.

\vspace{\baselineskip}

% ------------------------------------------------------------------------
\end{document}